\newcommand\R{\mathbb{R}}
\newcommand\N{\mathbb{N}}
\newcommand\pa{\partial}
\newcommand\A{{\bf A}}
\newcommand{\ma}{\beta_2}
\newcommand\LL{\mathcal{L}}
\numberwithin{equation}{section}
\newtheorem{proposition}{Proposition}[section]
\newtheorem{lemma}{Lemma}[section]
\newtheorem{theorem}{Theorem}[section]
\newtheorem{remark}{Remark}[section]
\begin{document}
\title[$L^p$-estimates for magnetic wave]{$L^p$-estimates for the wave equation with critical magnetic field in higher dimensions}

\author{Jialu Wang}
\address{Department of Mathematics, Beijing Institute of Technology, Beijing, China, 100081;} \email{jialu\_wang@bit.edu.cn}

\author{Chengbin Xu}
\address{School of Mathematics and Statistics, Qinghai Normal University, Xining, Qinghai, China, 810008; }
\email{xcbsph@163.com}

\author{Fang Zhang}
\address{Department of Mathematics, Beijing Institute of Technology, Beijing, China, 100081;} \email{zhangfang@bit.edu.cn}

\begin{abstract}
In this paper, we study the $L^{p}$-estimates for the solution to the wave equation with a scaling-critical magnetic potential in Euclidean $R^N$ with $N\geq3$. Inspired by the work of \cite{L}, we show that the operators $(I+\mathcal{L}_{\mathbf{A}})^{-\gamma}e^{it\sqrt{\mathcal{L}_{\mathbf{A}}}}$ is bounded in $L^{p}(\mathbb{R}^{N})$ for $1<p<+\infty$ when $\gamma>|1/p-1/2|$ and $t>0$, where $\mathcal{L}_{\mathbf{A}}$ is a magnetic Schr\"odinger operator. In particular, we derive the $L^{p}$-bounds for the sine wave propagator $\sin(t\sqrt{\mathcal{L}_{\mathbf{A}}})\mathcal{L}^{-\frac12}_{\mathbf{A}}$.
The key ingredient is the $L^p\rightarrow L^p$ boundedness of the analytic operator family $f_{w,t}(\mathcal{L}_{\mathbf{A}})$.
\end{abstract}
 \maketitle

\begin{center}
\begin{minipage}{120mm}
   { \small {{\bf Key Words:} $L^{p}$-estimates; Scaling-critical magnetic field; Wave equation.}
   }\\
\end{minipage}
\end{center}
\section{Introduction}
\subsection{The setting and motivation}
A fundamental question in harmonic analysis is to understand the $L^p$ boundedness of Fourier multipliers $M$ on $\mathbb{R}^N$, where these multipliers are defined by
$$
M(f)(x)=\frac{1}{(2 \pi)^n} \int_{\mathbb{R}^n} e^{i x \cdot \xi} m(\xi) \hat{f}(\xi) \,d \xi
$$
with $m$ as a measurable function. For radial multipliers where $m(\xi)=F(|\xi|)$, this analysis reduces to studying the $L^p$ boundedness of $F(\sqrt{\Delta})$, where $\Delta$ denotes the non-negative Laplacian in $\R^N$.
In particular, consider the metric measure space $X$ and a non-negative, self-adjoint operator $\boldsymbol{L}$ acting on the space $L^2(X)$. Such an operator allows for a spectral resolution $E_{\boldsymbol{L}}(\lambda)$, where $dE_{\boldsymbol{L}}(\lambda)$ represents the spectral measure connected with the operator $\boldsymbol{L}$. Therefore, given a real-valued Borel function $F$ on the interval $[0,\infty)$, we can define the operator $F(\boldsymbol{L})$ using the integral
\begin{equation}
F(\boldsymbol{L})=\int_{0}^{\infty}F(\lambda)\,dE_{\boldsymbol{L}}(\lambda).
\end{equation}
According to the spectral theory, the norm $\|F(\boldsymbol{L})\|_{L^2\rightarrow L^2}$ is bounded by the $L^{\infty}$ norm of $F$ over the spectrum of $\boldsymbol{L}$. However, the scenario for $p \neq 2$ presents significantly greater challenges.\vspace{0.2cm}

Over the last three decades, considerable significant attention has been devoted to identifying some necessary conditions on function $F$ that ensure $F(\boldsymbol{L})$ can extend to a bounded operator on a range of $L^p$ spaces for $p\neq2$.
Since the pioneering works of Mikhlin and H\"ormander \cite{H,Mi} on Fourier multipliers, we usually focus on finding conditions related to differentiability of the function $F$. These contributions have spurred significant progress in the study of spectral multipliers and broadened perspectives in harmonic analysis. The H\"ormander-Mikhlin theorem has been extended by numerous authors to a variety of operators beyond the Laplacian, as well as to frameworks outside the Euclidean setting. Probably the most natural and concrete examples are Schr\"odinger operators with real potential $\Delta+V$, or the Laplace-Beltrima operator on complete Riemannian manifolds. However these problems are also studied for abstract self-adjoint operators. In the theory of spectral multipliers, certain specific families of functions $F$ are also investigated including oscillatory integrals $e^{i(t \boldsymbol{L})^\alpha}\left(\mathrm{Id}+(t \boldsymbol{L})^\alpha\right)^{-\beta}$ and Bochner-Riesz means as the prominent examples. Given the extensive body of research on this topic, a comprehensive bibliography is impractical to provide here. For further reading, we refer the reader to \cite{COSY,Christ,Christ1,Christ2,Christ3,Co,DaSi,DSY,Fe,Fe1,Fe2,GHS,HaSi,Hebisch,DOS,SYY} and the references therein.\vspace{0.2cm}

 The objective of this article is to investigate certain $L^p$ multiplier properties associated with Schr\"odinger operator $\LL_{\A}$ with the scaling-critical magnetic potential on $\R^N$ for $N\geq3$, where the operator $\LL_{\A}$ given by
\begin{align}\label{LA}
\mathcal{L}_{\mathbf{A}}=\Big(i\nabla+\frac{\mathbf{A}(\hat{x})}{|x|}\Big)^{2},\quad x\in \mathbb{R}^{N}\backslash\{0\}.
\end{align}
Here $\hat{x}=\frac{x}{|x|}\in \mathbb{S}^{N-1}$ and $\mathbf{A}\in W^{1,\infty}(\mathbb{S}^{N-1};\R^N)$ satisfies the transversality condition
\begin{equation}\label{transcondition}
\mathbf{A}(\hat{x})\cdot \hat{x}=0,\quad \text{for all} \quad \hat{x}\in\mathbb{S}^{N-1}.
\end{equation}
Notably, the magnetic potential studied here is doubly critical, which are the scaling invariance and singularity of the potential. These features introduce additional  difficulties in various research fields, see \cite{FFFP,FFFP1,FZZ,GYZZ} and reference therein for dispersive and Strichatz estimates of Schr\"odinger, wave and Kelin-Gordorn equations; see \cite{BPST,FZZ1} for uniform resolvent estimates. Concerning the Schr\"odinger operator with magnetic potential, several results have been established, including Strichartz estimates, decay estimates, and local smoothing estimates. In this paper, we aim to establish the $L^p$ estimates for the wave propagator perturbed by the scaling-critical magnetic singular potentials.
\vspace{0.2cm}

Over the past few decades, extensive literature has developed on $L^p$-estimates for solution to the wave equation involving the Schr\"odinger operator (both in settings on manifolds and the operator with potentials), making it impractical to cite all relevant works here.
The study of $L^p$-estimate for the wave equation traces back to the space $\R^n$, classically going back to the work of \cite{M}\cite{P}. Notably, Peral \cite{P} and Miyachi \cite{M} established the sharp range of $p$, specifically $|\frac{1}{p}-\frac{1}{2}|\leq \frac{1}{n-1}$, for which $\frac{\sin(t\sqrt{-\Delta})}{\sqrt{-\Delta}}$ is $L^p(\R^n)$-bounded on $\R^n$. In addition, other $L^p\rightarrow L^q$ estimates were further explored, for example, by Strichartz \cite{S}. For the Hermite operator $-\Delta+|x|^2$, related results were presented in \cite{NT}, while broader operators of the form $-\Delta+V$ were investigated by Zhong \cite{Z}. Later, K. Jotsaroop and S. Thangavelu \cite{KT} established the operator $\frac{\sin\sqrt{G}}{G}$, in which $G$ denotes the Grushin operator $G=-\Delta-|x|^2\partial_{t}^2$, is $L^p(\R^{n+1})$-bounded for every $p$ satisfying $|\frac{1}{p}-\frac{1}{2}|<\frac{1}{n+2}$, thereby achieving the desired $L^p$ estimate for the corresponding solution. They subsequently derived the $L^p\rightarrow L^2$ estimate for wave equation with the Grushin operator via employing the $L^p$ boundedness of Riesz transform in \cite{TN}. Additionally, the $L^p$-boundedness of oscillating multipliers, on various classes of rank one locally symmetric spaces, was shown in \cite{E}.\vspace{0.2cm}

Specially, for Laplace-Beltrami operator on metric cone manifolds, Li and Lohoue in \cite{LN} established $L^p$-estimates for the wave equation on cone $C(Y)$, assuming that the injective radius $\rho$ of the closed section $Y$ exceeds $\pi$. Later, Li \cite{L} improved the result of \cite{LN} by removing the injective radius assumption for cone manifolds $C(Y)$ with the dimension $n\geq3$. Moreover, $L^p$-estimates for the wave equation have been extensively investigated in various other settings, including Heisenberg groups as explored in \cite{MS2}, symmetric spaces of non-compact type with real rank 1 as presented in \cite{GM, Ionescu}, and Damek-Ricci spaces as discussed in \cite{MV}.
For extra results on the wave equation in manifolds with conical singularities, refer to \cite{Mewun,MS1,SS1,SS2}. Inspired by the work of \cite{LN}, we continue to consider the the $L^p$-estimates for the wave equation involving critical magnetic Schr\"odinger operator $\mathcal{L}_{\bf A}$ in the metric cone manifold setting.

\subsection{The main result} In this paper, we investigate the $L^{p}$-estimates of the solution for the wave equations with a scaling-critical magnetic potential.
More precisely, we consider the following wave equation
\begin{align}\label{equation}
\begin{cases}
\partial_{tt}u+\mathcal{L}_{\mathbf{A}}u=0,& (t, x)\in \mathbb{R}\times \mathbb{R}^{N},\\
u(0, x)=f(x),& \partial_{t}u(0, x)=g(x),
\end{cases}
\end{align}
where the magnetic singular Schr\"odinger operator is given by \eqref{LA}.
It has been known that the solution to \eqref{equation} can be represented by
\begin{equation}
u(t,\cdot)=\cos(t\sqrt{\mathcal{L}_{\mathbf{A}}})f(\cdot)+\frac{\sin(t\sqrt{\mathcal{L}_{\mathbf{A}}})}{\sqrt{\mathcal{L}_{\mathbf{A}}}}g(\cdot).
\end{equation}
Our result related to the wave equation \eqref{equation} is concerned with the following.
\begin{theorem}\label{sinLA}
Let $\LL_{\A}$ be given magnetic Schr\"odinger operator in \eqref{LA}. There exists a constant $C>0$ such that, for any $1\leq p\leq+\infty,$
\begin{equation}\label{est:sinLA}
 \left\|\frac{\sin(t\sqrt{\mathcal{L}_{\mathbf{A}}})}{\sqrt{\mathcal{L}_{\mathbf{A}}}}g\right\|_{L^{p}(\mathbb{R}^{N})}\leq C |t|\cdot\Vert g\Vert_{L^{p}(\mathbb{R}^{N})},\quad \forall\; t\neq 0.
\end{equation}
\end{theorem}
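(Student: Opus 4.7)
The plan is to derive Theorem~\ref{sinLA} from the main spectral multiplier result on $(I+\LL_\A)^{-\gamma} e^{it\sqrt{\LL_\A}}$ quoted in the abstract, via a scaling reduction and a low/high frequency splitting of the multiplier $\sin(\lambda)/\lambda$.

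The first step is a scaling reduction. Because the magnetic potential $\A(\hat x)/|x|$ is $(-1)$-homogeneous and the transversality condition \eqref{transcondition} is scale-invariant, the dilation $U_s f(x)=f(x/s)$ satisfies $U_s^{-1}\LL_\A U_s = s^{-2}\LL_\A$. Through the functional calculus this yields
\[
\frac{\sin(t\sqrt{\LL_\A})}{\sqrt{\LL_\A}}=|t|\,U_{|t|}\,\frac{\sin(\sqrt{\LL_\A})}{\sqrt{\LL_\A}}\,U_{|t|}^{-1}
\]
(with a sign flip for $t<0$), and since $\|U_s\|_{L^p\to L^p}=s^{N/p}$ exactly cancels $\|U_s^{-1}\|_{L^p\to L^p}=s^{-N/p}$, the desired factor $|t|$ is extracted and the task reduces to proving the fixed-time bound $\|\sin(\sqrt{\LL_\A})/\sqrt{\LL_\A}\|_{L^p\to L^p}\le C$.

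For this fixed-time estimate I would fix $\phi\in C_c^\infty([0,2))$ with $\phi\equiv 1$ on $[0,1]$ and set $\psi=1-\phi$. The low-frequency part $\phi(\sqrt{\LL_\A})\sin(\sqrt{\LL_\A})/\sqrt{\LL_\A}$ has a smooth compactly supported symbol and is $L^p$-bounded for every $1\le p\le\infty$ by the H\"ormander-type spectral multiplier theorem available for $\LL_\A$ through the Gaussian heat-kernel bounds granted by the diamagnetic inequality. For the high-frequency part I would employ the key factorization
\[
\psi(\lambda)\frac{\sin\lambda}{\lambda}=\frac{\psi(\lambda)(1+\lambda^2)^{\gamma}}{\lambda}\cdot\frac{1}{2i}\Big[(1+\lambda^2)^{-\gamma}e^{i\lambda}-(1+\lambda^2)^{-\gamma}e^{-i\lambda}\Big],
\]
choosing $\gamma\in(|1/p-1/2|,1/2]$: on the support of $\psi$ the first factor is a smooth uniformly bounded symbol (since $(1+\lambda^2)^\gamma/\lambda\sim\lambda^{2\gamma-1}$ is bounded for $\gamma\le 1/2$), giving a bounded spectral multiplier, while the bracketed operator is bounded on $L^p$ by the main theorem. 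This handles every $1<p<\infty$.

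The hard part will be the endpoints $p=1$ and $p=\infty$, where $|1/p-1/2|=1/2$ and the interval $(|1/p-1/2|,1/2]$ is empty; taking $\gamma>1/2$ instead makes the first factor grow like $\lambda^{2\gamma-1}$ and so lose its $L^p$-boundedness. To reach the endpoints I plan to combine the self-adjointness of $\LL_\A$ (which reduces $L^1$ to $L^\infty$ by duality) with finite propagation speed: the representation $\sin(t\sqrt{\LL_\A})/\sqrt{\LL_\A}=\int_0^t\cos(s\sqrt{\LL_\A})\,ds$ confines the Schwartz kernel to $\{|x-y|\le t\}$, and combining this support restriction with the off-diagonal kernel estimates furnished by the analytic family $f_{w,t}(\LL_\A)$ central to the paper should yield the $L^\infty\to L^\infty$ bound with the claimed linear growth in $t$.
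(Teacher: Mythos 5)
Your route is genuinely different from the paper's. The paper proves Theorem \ref{sinLA} by Stein complex interpolation along the analytic family $f_{w,t}(\LL_{\A})$: an $L^2$ bound on the line $\mathrm{Re}\,w=\frac{N+1}{2}$ (from Strichartz's pointwise bounds on $f_{w,t}$) is interpolated against the $L^p$ bound of Theorem \ref{operator-family} on the line $\mathrm{Re}\,w=\varepsilon$, and the sine propagator is read off at $w=\frac{N-1}{2}$, where $f_{\frac{N-1}{2},t}(h)=\frac{\sin(t\sqrt h)}{t\sqrt h}$. You instead route through Theorem \ref{Lpestimate}. Your scaling reduction to $t=1$ is correct and is a nice use of the scale invariance of $\A(\hat x)/|x|$ (the paper does not need it, since its bounds are already uniform in $t$), and the low-frequency piece is unproblematic given the diamagnetic/Gaussian heat-kernel bounds.

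The quantitative core of your argument, however, does not deliver the range you claim. You import the hypothesis ``$\gamma>|1/p-1/2|$'' from the abstract, but the theorem actually proved in the paper reads $\|(1+\LL_{\A})^{-\gamma/2}e^{it\sqrt{\LL_{\A}}}\|_{p\to p}\lesssim(1+t)^{\gamma}$ under $|1/p-1/2|<\frac{\gamma}{N-1}$. In your factorization the bracket is $(1+\LL_{\A})^{-\gamma}e^{\pm i\sqrt{\LL_{\A}}}$, so Theorem \ref{Lpestimate} applies only when $2\gamma>(N-1)|1/p-1/2|$, while boundedness of the cofactor $\psi(\lambda)(1+\lambda^2)^{\gamma}/\lambda\sim\lambda^{2\gamma-1}$ forces $2\gamma\le 1$. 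These two constraints are compatible exactly when $|1/p-1/2|<\frac{1}{N-1}$, i.e. the Miyachi--Peral range, which is all of $1<p<\infty$ only for $N=3$ and is a strict subrange for $N\ge4$. Second, the endpoint paragraph is not a proof: finite propagation speed confines the support of the Schwartz kernel but does not make it an integrable function, and already for $\A=0$ and $N\ge4$ the sine propagator is unbounded on $L^1$ and $L^\infty$ (this is precisely the sharpness of Miyachi--Peral recalled in the introduction), so no argument of that type can close the endpoints. For what it is worth, the paper's own interpolation also only reaches exponents with $|1/p-1/2|<\frac{1}{N-1}$, so the range $1\le p\le\infty$ in the statement is not what either argument yields; but as written your proposal both overclaims the interior range (by using the abstract's condition rather than Theorem \ref{Lpestimate}'s) and leaves the endpoint step as an unsubstantiated hope.
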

In order to show Theorem \ref{sinLA}, we consider a family of analytic operators:
\begin{equation}
f_{w,t}(\mathcal{L}_{\mathbf{A}})=(\frac{\pi}{2})^{\frac{1}{2}}(t\sqrt{\LL_{\A}})^{w-\frac{N}{2}}J_{\frac{N}{2}-w}(t\sqrt{\LL_{\A}}),
\end{equation}
where $w=\varepsilon+(\frac{N+1}{2}-\varepsilon)z$ with $0\leq \mathrm{Re} z\leq1$ and $\frac{1}{2}<\varepsilon<1$.

Then, we have
\begin{theorem}\label{operator-family}
Let $\frac{1}{2}<\varepsilon<1$ and $ 1<p\leq+\infty$. Then, there exists a constant $C(\varepsilon,p)>0$ such that
\begin{equation}\label{est:fw}
\left\|f_{w,t}(\mathcal{L}_{\mathbf{A}})g\right\|_{L^p(\R^N)}\leq C(\varepsilon,p)\frac{e^{-2N\pi|y|}}{|\Gamma(\frac{N}{2}-w)|}\left\| g\right\|_{L^p(\R^N)}, \quad\forall\; g\in L^{p}(\mathbb{R}^{N}),
\end{equation}
where $w=\varepsilon+iy(\frac{N+1}{2}-\varepsilon)$ with $y\in \mathbb{R}$.
\end{theorem}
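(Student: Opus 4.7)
The plan is to reduce the bound on $f_{w,t}(\LL_{\A})$ to a weighted average of cosine-wave propagators via the Poisson integral representation of the Bessel function, and then to analyze the resulting Schwartz kernel using the spherical-harmonic decomposition of $\LL_{\A}$ adapted to the magnetic potential $\mathbf{A}$, adapting to our setting the strategy used by Li \cite{L} for the Laplace--Beltrami operator on metric cones.

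First, since $\operatorname{Re}\!\bigl((N+1)/2-w\bigr)=(N+1)/2-\varepsilon>1/2$, the standard Poisson integral representation for $J_{\nu}$ yields
\begin{equation*}
f_{w,t}(\LL_{\A})=\frac{2^{w-N/2+1/2}}{\Gamma\!\left(\tfrac{N+1}{2}-w\right)}\int_{0}^{1}(1-s^{2})^{\frac{N-1}{2}-w}\cos(st\sqrt{\LL_{\A}})\,ds,
\end{equation*}
and the observation $\bigl|(1-s^{2})^{-iy((N+1)/2-\varepsilon)}\bigr|=1$ ensures that the weight is pointwise bounded by the integrable function $(1-s^{2})^{(N-1)/2-\varepsilon}$; the discrepancy between $|\Gamma((N+1)/2-w)|^{-1}$ produced here and $|\Gamma(N/2-w)|^{-1}$ in the target bound is only a polynomial factor in $|y|$ by Stirling and can be absorbed into $C(\varepsilon,p)$. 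Next, I would expand $\cos(st\sqrt{\LL_{\A}})$ in the orthonormal basis $\{\psi_{n}\}$ of eigenfunctions for the angular magnetic operator on $\mathbb{S}^{N-1}$, with eigenvalues $\mu_{n}$ and Bessel orders $\nu_{n}=\sqrt{\mu_{n}+(N/2-1)^{2}}$, following the now-standard cone-type decomposition of $\LL_{\A}$ in \cite{FFFP,FZZ}. Reinserting this expansion, interchanging the sum with the $s$-integration, and evaluating the resulting Hankel-type integral mode by mode should yield a closed-form kernel for $f_{w,t}(\LL_{\A})$ that is supported in the wave cone $\bigl||x|-|z|\bigr|\le t\le|x|+|z|$ by finite propagation speed. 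The $L^{p}$ boundedness for $1<p\le+\infty$ would then follow by combining Young's convolution inequality in the radial variable with a Schur-type estimate in the angular variable, summed over the modes $\nu_{n}$.

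The main obstacle is the uniform-in-$y$ kernel estimate: the complex-order Bessel function $J_{N/2-w}$ has Stokes-type exponential behaviour in certain sectors when $|\operatorname{Im}(w)|$ is large, and the factor $e^{-2N\pi|y|}$ must be squeezed out of the combined asymptotics for $J_{N/2-w}$ and $\Gamma((N+1)/2-w)$, uniformly in the angular order $\nu_{n}$ so that the summation over $n$ converges. I expect the bulk of the technical effort to lie precisely in this mode-wise complex-order analysis; once a clean kernel bound is secured, the final $L^{p}$ passage is essentially routine and follows the pattern of \cite{L}.
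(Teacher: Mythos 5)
Your starting point is sound: the Poisson subordination formula for $J_{N/2-w}$ does reduce $f_{w,t}(\LL_{\A})$ to a weighted average of $\cos(st\sqrt{\LL_{\A}})$, and combined with the mode decomposition this is essentially one route to the explicit kernel that the paper instead imports from Cheeger--Taylor functional calculus as \eqref{kernel}, expressed via Legendre functions $P^{w-\frac{N-1}{2}}_{L-\frac12}$ and $Q^{w-\frac{N-1}{2}}_{L-\frac12}$ of the angular operator. So the derivation of the kernel is fine. The genuine gap is in your last step: declaring the $L^p$ passage ``essentially routine'' (Young in $r$, Schur in $\theta$, summed over modes) skips what is in fact the entire technical content of the theorem. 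A mode-by-mode sum does not converge with useful bounds, because you have no pointwise control on the magnetic eigenfunctions $\psi_n$ beyond what heat-kernel and parametrix estimates for the \emph{operator} $L_{\mathbf A}$ provide; the paper never sums over modes at all.

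Concretely, the paper's proof splits the kernel into three regions and uses a \emph{different} operator-level tool in each, and none of them is captured by ``Young~$+$~Schur'': (i)~for $t>r_1+r_2$ (the piece $I_1$) one needs the $L^1\to L^\infty$ estimate for $e^{-hL}\cos(\pi L)$ on $\mathbb S^{N-1}$, obtained from the subordinated heat kernel and Weyl bound, to run a Schur test; (ii)~for $|r_1-r_2|<t<r_1+r_2$ with small angular separation ($I_2$), one must insert the Hadamard parametrix for $\cos(hL)$ (Lemma~\ref{parametrix}) and control fractional distributional powers $(h^2-d^2)_+^{k-N/2}$, which is where the restriction $\tfrac12<\varepsilon<1$ and the Riemann--Liouville manipulation enter; (iii)~for large angular separation ($I_3$), one cannot get pointwise kernel bounds at all and must factor the Legendre function as $M=UV-\overline{U}\,\overline{V}$ and invoke Sogge's spectral multiplier theorem for first-order pseudodifferential operators on the compact manifold to bound $V(w,\pm\beta_1;L)$ in $L^p(\mathbb S^{N-1})$, and it is precisely here that the exponential factor in $|y|$ is produced, not from Stokes-sector asymptotics of $J_{N/2-w}$ as you anticipate. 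Without identifying this three-region decomposition and the distinct tools (sphere heat kernel, Hadamard parametrix, Sogge multiplier theorem) your sketch cannot close; the part you deferred as routine is where the proof actually lives.
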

As a consequence of Theorem \ref{operator-family} and spectral multiplier theorem, we can acquire
\begin{theorem}\label{Lpestimate}
Let $\LL_{\A}$ be given magnetic Schr\"odinger operator in \eqref{LA} and let $\gamma>0$ and $1< p<+\infty$ satisfy $|\frac{1}{p}-\frac{1}{2}|<\frac{\gamma}{N-1}$. Then, there exists a constant $C(p, \gamma)>0$ such that for $t>0$ and for all $f\in L^{p}(\mathbb{R}^{N})$
\begin{equation}
\left\|(1+\mathcal{L}_{\mathbf{A}})^{-\frac{\gamma}{2}}e^{it\sqrt{\mathcal{L}_{\mathbf{A}}}}f\right\|_{L^{p}(\R^N)}\leq C(p, \gamma)(1+t)^{\gamma}\left\|f\right\|_{L^{p}(\R^N)}.
\end{equation}
\end{theorem}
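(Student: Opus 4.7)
The plan is to apply Stein's complex interpolation theorem to an analytic family built from the operators $f_{w,t}(\LL_{\A})$ of Theorem~\ref{operator-family}. The starting observation is that the Bessel identities $J_{-1/2}(x) = \sqrt{2/(\pi x)}\cos x$ and $J_{1/2}(x) = \sqrt{2/(\pi x)}\sin x$ give
\[
\cos\!\bigl(t\sqrt{\LL_{\A}}\bigr) = f_{(N+1)/2,t}(\LL_{\A}),\qquad \frac{\sin\!\bigl(t\sqrt{\LL_{\A}}\bigr)}{t\sqrt{\LL_{\A}}} = f_{(N-1)/2,t}(\LL_{\A}),
\]
so the cosine is realized at the specific value $w = (N+1)/2$ of the family. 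Writing $e^{it\sqrt{\LL_{\A}}} = \cos(t\sqrt{\LL_{\A}}) + i\sin(t\sqrt{\LL_{\A}})$ and using the duality $(e^{it\sqrt{\LL_{\A}}})^{*} = e^{-it\sqrt{\LL_{\A}}}$, matters reduce to $p \geq 2$, and I treat the cosine part in detail (the sine part is handled analogously below).

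Fix $p > 2$ and $\gamma > (N-1)(\tfrac12 - \tfrac1p)$. Since $N + 1 - 2\varepsilon \to N-1$ as $\varepsilon \to 1^{-}$, I first choose $\varepsilon \in (\tfrac12,1)$ close enough to $1$ so that $\gamma > (N+1 - 2\varepsilon)(\tfrac12 - \tfrac1p)$. Setting $\theta_0 = 2/p \in (0,1)$, $\beta = (N+1)/2$, and the linear function $w(z) = \varepsilon + \tfrac{z}{\theta_0}(\beta - \varepsilon)$, I will consider on the strip $\{0 \leq \mathrm{Re}\,z \leq 1\}$ the analytic family
\[
T_z := e^{z^{2}}\,\Gamma\!\bigl(\tfrac{N}{2} - w(z)\bigr)\, f_{w(z),t}(\LL_{\A})\,(I + \LL_{\A})^{-\gamma z/(2\theta_0)},
\]
engineered so that $w(\theta_0) = \beta$ and $T_{\theta_0}$ is a nonzero scalar multiple of $(I+\LL_{\A})^{-\gamma/2} \cos(t\sqrt{\LL_{\A}})$. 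On $\mathrm{Re}\,z = 0$, Theorem~\ref{operator-family} furnishes $\|f_{w(iy),t}\|_{L^p \to L^p} \lesssim e^{-2N\pi|y|}/|\Gamma(\tfrac{N}{2} - w(iy))|$: the $\Gamma$-prefactor in $T_z$ cancels this denominator, the imaginary power $(I + \LL_{\A})^{-i\gamma y/(2\theta_0)}$ is $L^p$-bounded with at most polynomial growth in $|y|$ by the classical imaginary-power spectral-multiplier theorem for $\LL_{\A}$, and the Gaussian $e^{-y^{2}}$ absorbs what remains; hence $\sup_y \|T_{iy}\|_{L^p \to L^p} < \infty$.

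On $\mathrm{Re}\,z = 1$, the uniform Bessel bounds $|J_\nu(x)| \lesssim |x|^{-1/2} e^{\pi|\mathrm{Im}\,\nu|/2}$ (for $|x| \gtrsim 1$) and $|J_\nu(x)| \lesssim |x/2|^{\mathrm{Re}\,\nu} / |\Gamma(\nu+1)|$ (for $|x| \lesssim 1$) will show that the spectral multiplier of $T_{1+iy}$ is bounded in $\lambda$ by $C t^{b-\beta}\, e^{\pi(\beta-\varepsilon)|y|/(2\theta_0)}$, where $b := \varepsilon + (\beta - \varepsilon)/\theta_0$, provided $\gamma/\theta_0 \geq b - \beta = (\beta - \varepsilon)(1 - \theta_0)/\theta_0$; this last inequality is exactly $\gamma \geq (N + 1 - 2\varepsilon)(\tfrac12 - \tfrac1p)$, which holds by the choice of $\varepsilon$. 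Stirling's asymptotic then makes $|\Gamma(\tfrac{N}{2} - w(1+iy))|$ decay like $e^{-\pi(\beta-\varepsilon)|y|/(2\theta_0)}$, exactly balancing the Bessel growth, and $e^{1-y^{2}}$ mops up any residual polynomial factor in $|y|$, so $\|T_{1+iy}\|_{L^2 \to L^2} \leq C t^{b-\beta} e^{1-y^{2}}$. Stein's theorem applied at the interior point $z = \theta_0$ (where $1/p_{\theta_0} = (1-\theta_0)\cdot 0 + \theta_0/2 = 1/p$) then gives $\|T_{\theta_0}\|_{L^p \to L^p} \leq C (t^{b-\beta})^{\theta_0} = C t^{(N+1-2\varepsilon)(\tfrac12 - \tfrac1p)} \leq C(1+t)^{\gamma}$, the cosine estimate. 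The sine estimate follows identically with $\beta = (N-1)/2$, yielding an $L^p$-bound on $(I + \LL_{\A})^{-\gamma/2} f_{(N-1)/2,t}(\LL_{\A})$; one then recovers $(I + \LL_{\A})^{-\gamma/2} \sin(t\sqrt{\LL_{\A}})$ from the identity $\sin(t\sqrt{\LL_{\A}}) = t\sqrt{\LL_{\A}} \cdot f_{(N-1)/2,t}(\LL_{\A})$ together with the $L^p$-boundedness of the Riesz-transform-type operator $\sqrt{\LL_{\A}}(I + \LL_{\A})^{-1/2}$, redistributing a unit of resolvent weight and absorbing the extra factor of $t$ into $(1+t)^{\gamma}$.

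The principal technical obstacle will be the $L^2$ estimate on $\mathrm{Re}\,z = 1$: the Bessel function $J_{N/2-w}$ with complex index grows exponentially like $e^{\pi|\mathrm{Im}\,w|/2}$ in the imaginary direction, and only the matched Stirling decay of $\Gamma(\tfrac{N}{2} - w)$ can compensate it -- this is why the precise normalization of $T_z$ (the $\Gamma$-prefactor coupled with the Gaussian $e^{z^{2}}$ and the resolvent weight) is indispensable for the Stein-interpolation argument to close. Fine-tuning $\varepsilon$ close to $1$ is the mechanism that upgrades the ``natural'' interpolation exponent $(N+1-2\varepsilon)|\tfrac12 - \tfrac1p|$ to the sharp $(N-1)|\tfrac12 - \tfrac1p|$ demanded by the hypothesis.
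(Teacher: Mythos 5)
Your proposal takes a genuinely different route from the paper. The paper divides the $(1/p,\ell)$-plane into three regions (Fig.\ 1), writes $m(\ell,s)=(1+s^2)^{-\ell}e^{is}$, splits it into a small-$s$ piece $M_\ell$ (controlled by the Mikhlin--H\"ormander multiplier bound of Lemma \ref{lem:multiplier}) and a large-$s$ piece $m_\ell(s)=\psi(s)s^{-2\ell}e^{is}$, and for the latter defers the substantive interpolation to \cite[Thm.\ 1.2]{L} via Proposition \ref{estimate:ml}. You instead run a one-shot Stein interpolation directly on an analytic family built from $f_{w,t}(\LL_\A)$ tensored with a complex resolvent weight $(I+\LL_\A)^{-\gamma z/(2\theta_0)}$, realizing $\cos(t\sqrt{\LL_\A})$ at $w=\tfrac{N+1}{2}$ and $\sin(t\sqrt{\LL_\A})/(t\sqrt{\LL_\A})$ at $w=\tfrac{N-1}{2}$, and tuning $\varepsilon\uparrow1$ to recover the sharp exponent $N-1$. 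This is a cleaner and more self-contained plan in spirit, but as written it has two genuine gaps.

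\textbf{(1) The $\Gamma$-prefactor destroys analyticity of the family.} You put $\Gamma(\tfrac{N}{2}-w(z))$ in $T_z$ to cancel the $1/|\Gamma(\tfrac{N}{2}-w)|$ in the Theorem \ref{operator-family} bound. But $\Gamma(\tfrac{N}{2}-w(z))$ has a pole inside the open strip: since $w(z)=\varepsilon+\tfrac{z}{\theta_0}(\beta-\varepsilon)$ with $\beta=\tfrac{N+1}{2}$, the equation $\tfrac{N}{2}-w(z)=0$ is solved at $z_0=\theta_0\,\tfrac{N/2-\varepsilon}{(N+1)/2-\varepsilon}\in(0,\theta_0)\subset(0,1)$, and $f_{N/2,t}(\LL_\A)=\sqrt{\pi/2}\,J_0(t\sqrt{\LL_\A})\neq 0$ does not produce a compensating zero. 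An analytic family with a pole in the interior cannot feed Stein's theorem. The fix is actually simpler than what you wrote: drop the $\Gamma$-prefactor entirely. The $e^{z^2}$ Gaussian alone dominates the at-worst exponential-in-$|y|$ growth of $1/|\Gamma(\tfrac{N}{2}-w(iy))|$ (from Stirling) and of the Bessel factor $e^{\pi|\mathrm{Im}\,\nu|/2}$ on the $L^2$ line; alternatively, Stein's theorem already admits exponential (sub-double-exponential) growth in $|y|$ without any prefactor. Your remark that the $\Gamma$-normalization is ``indispensable'' is exactly backwards: it is dispensable and, as placed, fatal.

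\textbf{(2) The Riesz-transform reduction for the sine fails for $\gamma<1$.} You recover $\sin(t\sqrt{\LL_\A})$ by writing $\sin(t\sqrt{\LL_\A})=t\sqrt{\LL_\A}\,f_{(N-1)/2,t}(\LL_\A)$ and factoring out $\sqrt{\LL_\A}(I+\LL_\A)^{-1/2}$, ``redistributing a unit of resolvent weight.'' This forces you to bound $(I+\LL_\A)^{-(\gamma-1)/2}f_{(N-1)/2,t}(\LL_\A)$, which has a \emph{negative} regularizing exponent as soon as $\gamma<1$, and the hypothesis $\gamma>(N-1)\lvert\tfrac1p-\tfrac12\rvert$ certainly permits $\gamma<1$ (e.g.\ $N=3$, $p=3$, $\gamma=\tfrac12$). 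The remedy is to build $\sqrt{\LL_\A}$ into the analytic family rather than factoring it off: use $\tilde T_z=e^{z^2}\,(t\sqrt{\LL_\A})^{z/\theta_0}\,f_{w(z),t}(\LL_\A)\,(I+\LL_\A)^{-\gamma z/(2\theta_0)}$ with $\beta=\tfrac{N-1}{2}$, so that at $z=\theta_0$ you produce the full $\sin(t\sqrt{\LL_\A})$. On $\mathrm{Re}\,z=0$ the extra factor is a pure imaginary power and is $L^p$-bounded; on $\mathrm{Re}\,z=1$ the effective real order becomes $b+\tfrac{1}{\theta_0}$, and the condition for the $L^2$ multiplier to stay bounded again works out to $\gamma\ge(N+1-2\varepsilon)(\tfrac12-\tfrac1p)$, identically as for the cosine, with the same final $t$-power. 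One small further issue: you interpolate with $L^\infty$ on $\mathrm{Re}\,z=0$, but the imaginary powers of $(I+\LL_\A)$ (Lemma \ref{lem:multiplier}) are only bounded for $1<p<\infty$; use a finite $p_0$ near $\infty$ and send $p_0\to\infty$ (or $\varepsilon\to1$) at the end to recover the sharp range. With these three corrections your strategy is viable and arguably more transparent than the paper's $m_\ell/M_\ell$ decomposition.
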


\section{Preliminaries}\label{Preliminaries}
\subsection{Notations} We denote the Gamma function as $\Gamma(z)$. Also, we use $P_{\lambda}^{\mu}(x)$ and $Q_{\lambda}^{\mu}(z)$ to represent the Legendre functions when $|x|<1$ and $|z|>1$ respectively, for more details and properties about $P_{\lambda}^{\mu}(x)$ and $Q_{\lambda}^{\mu}(z)$, please refer to \cite{L,LN,W}.
\subsection{The kernel of analytic operator $f_{w,t}(\LL_{\A})$}\label{functional}
The purpose of this subsection is to demonstrate the kernel of analytic operator $f_{w,t}(\LL_{\A})$, to this end, we first recall the spectral property for the magnetic operator $\LL_{\A}$ with $N\geq3$ and classical functional calculus.

Recall the magnetic operator $\mathcal{L}_{\mathbf{A}}$ in \eqref{LA} given by
\begin{align}\label{def:LA}
\mathcal{L}_{\mathbf{A}}=-\Delta+|x|^{-2}(|\mathbf{A}(\hat{x})|^2+i \text{div}_{\mathbb{S}^{N-1}}\mathbf{A}(\hat{x}))+2i|x|^{-1}\mathbf{A}(\hat{x})\cdot \nabla.
\end{align}
Using \eqref{transcondition} and polar coordinates, we can write $\mathcal{L}_{\mathbf{A}}$ as follows
\begin{equation}\label{def:LbfA} \mathcal{L}_{\mathbf{A}}=-\partial_{r}^{2}-\frac{1}{r}\partial_{r}+\frac{L_{\mathbf{A}}}{r^{2}},  \end{equation}
in which the operator $L_{\mathbf{A}}$ is denoted by
\begin{align}
L_{\mathbf{A}}&=(i\nabla_{\mathbb{S}^{N-1}}+\mathbf{A}(\hat{x}))^{2},\quad \hat{x}\in \mathbb{S}^{N-1}\nonumber\\
&=-\Delta_{\mathbb{S}^{N-1}}+(|\mathbf{A}(\hat{x})|^{2}+i\mathrm{d}\mathrm{i}\mathrm{v}_{\mathbb{S}^{N-1}}\mathbf{A}(\hat{x}))+2i\mathbf{A}(\hat{x})\cdot\nabla_{\mathbb{S}^{N-1}}.
\end{align}
According to the classical spectral theory, $\LL_{\mathbf{A}}$ possesses a diverging sequence of real eigenvalues with finite multiplicity satisfying $\mu_1(\mathbf{A}) \leq \mu_2(\mathbf{A}) \leq \cdots \leq \mu_k(\mathbf{A}) \leq \cdots$, more details about this operator $\LL_{\A}$ can be found in \cite{FFFP} and \cite[Lemma 2.5]{FFT}.
For each $k \in \mathbb{N}, k \geqslant 1$, we write $\psi_k$ as a $L^2\left(\mathbb{S}^{N-1}, \mathbb{C}\right)$-normalized eigenfunction of the operator $L_{\mathbf{A}}$ on sphere $\mathbb{S}^{N-1}$, which correspond to the $k^{\text {th }}$ eigenvalue $\mu_k(\mathbf{A})$. Then, we have
\begin{align}\label{eigenfunction}
\begin{cases}
L_{\mathbf{A}} \psi_k=\mu_k(\mathbf{A}) \psi_k(\theta), \quad \text { in } \mathbb{S}^{N-1},\\
\int_{\mathbb{S}^{N-1}}\left|\psi_k(\theta)\right|^2\; d S(\theta)=1.
\end{cases}
\end{align}
Specially, in the sequence of eigenvalues $\mu_1(\mathbf{A}) \leqslant \mu_2(\mathbf{A}) \leq \cdots \leq \mu_k(\mathbf{A}) \leq \cdots$, each eigenvalue is repeated according to its multiplicity. Thus, there is exactly one eigenfunction $\psi_k$ corresponding to each index $k\in \mathbb{N}$. We can select the functions $\psi_k$ in such a way that they form an orthonormal basis for $L^2\left(\mathbb{S}^{N-1}, \mathbb{C}\right)$.
Under the condition $\mu_{1}>-(\frac{N-2}{2})^2$, the quadratic form connected with $\LL_{\A}$ is positive definite, then meaning that $\LL_{\A}$ is a symmetric and semi-bounded operator on $L^2\left(\mathbb{S}^{N-1}, \mathbb{C}\right)$. As a result, it admits a self-adjoint extension, specifically the Friedrichs extension, which has the natural form domain.

Now, in order to demonstrate Theorem \ref{operator-family}, we start by recalling the kernel of the analytic operator $f_{w,t}(\LL_{\A})$. For simplicity, we define the following notations:
\begin{align*}
\alpha=&\frac{N-2}{2},\quad L=\sqrt{L_{\mathbf{A}}+\alpha^2},\\
w=&\varepsilon+iy\big(\tfrac{N+1}2-\varepsilon\big)\;\quad\text{with}\;
\tfrac12<\varepsilon<1\;\text{and}\;y\in\R,
\end{align*}
and for $h>0$, we introduce the operator
$$J_{L}(h)=\int_0^\infty J_{\sqrt{\xi+\alpha^2}}(h)\;dE_{L}(\xi).$$
Next, using the polar coordinates, we can express $x,y\in\R^{N}$ as follows
$$x=(r_1\cos\theta_1,r_1\sin\theta_1)\triangleq(r_1,\theta_1),$$
and
$$y=(r_2\cos\theta_2,r_2\sin\theta_2)\triangleq(r_{2},\theta_{2}).$$

Let the kernel of operator $f_{w,t}(\LL_{\A})$ be denoted by $K_{w,t}((r_1,\theta_1),(r_2,\theta_2))$.
Then, from the standard functional calculus for $\LL_{\A}$ on $\R^N$ established in \cite{CT}, we can have
\begin{align}\label{kernel}
&K_{w,t}((r_1,\theta_1),(r_2,\theta_2))\nonumber\\
=&(r_1r_2)^{-\alpha}\int_0^\infty f_{w,t}(\lambda^2)J_L(\lambda r_1)J_{L}(\lambda r_2)\lambda\;d\lambda\nonumber\\
=&\frac{1}{2}t^{2(w-\frac{n}{2})}(r_1r_2)^{-w}\times \begin{cases}
0&\text{if}\quad t<|r_1-r_2|;\\
\sin^{\frac{N-1}{2}-w}\beta_1 P_{L-\frac{1}{2}}^{w-\frac{N-1}{2}} &\text{if}\quad |r_1-r_2|<t<r_1+r_2;\\
\frac{2}{\pi}\sinh^{\frac{N-1}{2}-w}\beta_2\cos(\pi L)Q_{L-\frac{1}{2}}^{w-\frac{N-1}{2}}(\cosh\beta_2) & \text{if} \quad t>r_1+r_2;\end{cases}\nonumber\\
=&\frac{1}{\sqrt{2\pi}\Gamma(\frac{N}{2}-w)}t^{2(w-\frac{N}{2})}(r_1r_2)^{-w}
\nonumber\\
&\times \begin{cases}
0 &\text{if}\quad t<|r_1-r_2|;\\
\int_0^{\beta_1}(\cos h-\cos\beta_1)^{\alpha-w}\cos(hL)(\theta_1,\theta_2)\;dh &\text{if}\quad |r_1-r_2|<t<r_1+r_2;\\
e^{i\pi(w-\frac{N-1}{2})}\int_{\beta_2}^\infty (\cos h-\cos \beta_2)^{\alpha-w}e^{-hL}\cos(\pi L)(\theta_1,\theta_2)\;dh & \text{if} \quad t>r_1+r_2;
\end{cases}
\end{align}
where
$$\beta_1=\arccos\frac{r_1^2+r_2^2-t^2}{2r_1r_2}\quad \text{and}\quad
\beta_2={\rm arc}\cosh\frac{t^2-r_1^2-r_2^2}{2r_1r_2}.$$
\subsection{Hadamard parametrix}It is essential to use the fundamental solution when calculating the $p\to p$ boundedness of the analytic operator family in Proposition \ref{prop:I2}, so we require the results concerning the fundamental solution to the wave equation on compact manifold, which could be found in \cite{Ber,Ber90,Hor}. In the context of this article, our compact manifold is $\mathbb{S}^{N-1}$. Consequently, in the following lemma statement, we will record exclusively the fundamental solution to wave equation on the compact manifold $\mathbb{S}^{N-1}$.
\begin{lemma}\label{parametrix}
For any $K\in\N^\ast,\;0<h<A_0(<\rho_0)$ and $s>0$, where $\rho_0$ denotes the injective radial of $\mathbb{S}^{N-1}$, we have (formally):
\begin{equation}\label{equ:cosslmm}
  \cos(sL)(\theta_1,\theta_2)=\sum_{k=0}^{K}U_k(\theta_1,\theta_2)s\frac{(s^2-d_{\mathbb{S}^{N-1}}^2(\theta_1,\theta_2))_+^{k-\frac{N}{2}}}
  {\Gamma(k-\frac{N}{2}+1)}+sC_k(s,\theta_1,\theta_2),
\end{equation}
where

$\bullet$ $U_k(\theta_1,\theta_2)\in C^\infty(\mathbb{S}^{N-1}\times\mathbb{S}^{N-1}).$

$\bullet$ $C_k(s,\theta_1,\theta_2)\in C([0,A_0]\times \mathbb{S}^{N-1}\times \mathbb{S}^{N-1})$, for $K$ sufficient large, for example $K=N+1$, and $C_K(s,\theta_1,\theta_2)=0$ when $d_{\mathbb{S}^{N-1}}(\theta_1,\theta_2)>s$.

$\bullet$ For $g\in C^\infty(\mathbb{S}^{N-1})$ and $\theta_1\in \mathbb{S}^{N-1}$, we note
$$\overline{g_{\theta_1}}(h)=\int_{S^h(\theta_1)}g(\theta_1^{'})\;d\sigma(\theta_1^{'}),\quad 0<h<\rho_0,$$
where $S^h(\theta_1)=\{\theta_1{'}\in \mathbb{S}^{N-1};\;d_{\mathbb{S}^{N-1}}(\theta_1,\theta_1^{'})=h\}$ and $d\sigma(\theta_1^{'})$ denotes the induced surface measure. Then, for $\tfrac{N}2-k\geq1$, in the distribution sense $\frac{(s^2-d_{\mathbb{S}^{N-1}}^2(\theta_1,\theta_2))_+^{k-\frac{N}{2}}}
{\Gamma(k-\frac{N}{2}+1)}$ (for $0<s<\rho_0$) is as follows:
\begin{align*}
&\Big\langle \frac{(s^2-d_{\mathbb{S}^{N-1}}^2(\theta_1,\theta_2))_+^{k-\frac{N}{2}}}
{\Gamma(k-\frac{N}{2}+1)},g\Big\rangle(\theta_1)=\Big\langle
\frac{(s^2-h^2)_+^{k-\frac{N}{2}}}
{\Gamma(k-\frac{N}{2}+1)},\overline{g_{\theta_1}}(h)\Big\rangle\\
=& \begin{cases}
(\frac{1}{2s}\frac{\pa}{\pa s})^{\frac{N}{2}-k-1}[\frac{1}{2s}\overline{g_{\theta_1}}(s)],\quad\text{if}\quad
\frac{N}{2}\in\N\\
(\frac{1}{2s}\frac{\pa}{\pa s})^{\frac{N}{2}-k-\frac12}\int_0^s\frac{(s^2-h^2)^{-\frac12}}{\Gamma(1/2)}
\overline{g_{\theta_1}}(h)\;dh\quad\text{if}\quad
\frac{N}{2}\not\in\N
\end{cases}
\end{align*}
for all $\theta_1\in \mathbb{S}^{N-1}$ and $g\in C^\infty(\mathbb{S}^{N-1})$.
\end{lemma}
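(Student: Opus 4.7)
The plan is to construct the Hadamard parametrix for the wave kernel $\cos(sL)$ on the compact manifold $\mathbb{S}^{N-1}$ via a geometric optics ansatz, and then read off the distributional formulas using the coarea formula. Let $d = d_{\mathbb{S}^{N-1}}(\theta_1,\theta_2)$. Since $A_0 < \rho_0$, inside the geodesic tube $\{d < \rho_0\}$ the squared distance $d^2$ is smooth and satisfies the eikonal identity $|\nabla_{\theta_1} d^2|^2 = 4 d^2$. I would seek smooth coefficients $U_k(\theta_1,\theta_2)$ on $\mathbb{S}^{N-1}\times\mathbb{S}^{N-1}$ so that the truncated sum
\begin{equation*}
E_K(s,\theta_1,\theta_2) = \sum_{k=0}^{K} U_k(\theta_1,\theta_2)\, s\, \frac{(s^2 - d^2)_+^{k - N/2}}{\Gamma(k - N/2 + 1)}
\end{equation*}
satisfies $(\partial_s^2 + L^2) E_K = s\cdot R_K$ with $R_K$ continuous for $K$ sufficiently large, and matches the initial data of $\cos(sL)$ in the distribution sense.

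The heart of the construction is that applying $\partial_s^2 + L^2$ to $E_K$ and using the Riesz-type identity
\begin{equation*}
\partial_s \Bigl[(s^2-d^2)_+^{\alpha+1}/\Gamma(\alpha+2)\Bigr] = 2s\,(s^2-d^2)_+^{\alpha}/\Gamma(\alpha+1)
\end{equation*}
organizes the contributions by order of singularity. Cancelling the leading singularity gives a first-order transport equation for $U_0$ along radial geodesics from $\theta_1$, normalized at the diagonal so that $\partial_s E_K|_{s=0}$ reproduces $\delta_{\theta_1=\theta_2}$; each subsequent $U_k$ is then determined recursively by an inhomogeneous transport equation and is smooth. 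For $K \geq N+1$ the residual $R_K$ is continuous on $[0,A_0]\times\mathbb{S}^{N-1}\times\mathbb{S}^{N-1}$. Defining $C_K$ so that $\cos(sL) = E_K + s\,C_K$ and recovering $C_K$ by Duhamel's formula applied to $R_K$, finite speed of propagation for $\cos(sL)$ on $\mathbb{S}^{N-1}$ then delivers both the claimed continuity of $C_K$ and the support property $C_K(s,\theta_1,\theta_2) = 0$ when $d(\theta_1,\theta_2) > s$.

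To derive the explicit distributional pairing, I would use geodesic polar coordinates around $\theta_1$ together with the coarea formula to reduce the $(N-1)$-dimensional integration against $g$ to the one-variable pairing
\begin{equation*}
\Big\langle \frac{(s^2 - h^2)_+^{k - N/2}}{\Gamma(k - N/2 + 1)}, \overline{g_{\theta_1}}(h) \Big\rangle,
\end{equation*}
then iterate the primitive identity $(s^2-h^2)_+^{k-N/2}/\Gamma(k-N/2+1) = (1/(2s))\partial_s [(s^2-h^2)_+^{k-N/2+1}/\Gamma(k-N/2+2)]$ until the remaining power lies in a range where the pairing is elementary. When $N/2 \in \N$, iterating $N/2-k$ times reduces the kernel to $\chi_{\{h\leq s\}}$, whose pairing with $\overline{g_{\theta_1}}$ is $\int_0^s \overline{g_{\theta_1}}(h)\,dh$; one further application of $(1/(2s))\partial_s$ extracts the boundary value and gives the stated $(\frac{1}{2s}\partial_s)^{N/2-k-1}[\frac{1}{2s}\overline{g_{\theta_1}}(s)]$. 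When $N/2 \notin \N$ the iteration stops at exponent $-1/2$, producing the explicit convolution against $(s^2-h^2)^{-1/2}/\Gamma(1/2)$. The main obstacle will be the bookkeeping of the Riesz kernels at the critical exponents $k-N/2$, where the distributions must be defined by analytic continuation in $k$ and one has to verify that the transport recursion together with the terminating boundary integrations precisely match the explicit formulas stated in the lemma.
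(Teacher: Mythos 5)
The paper does not prove this lemma; it records it as a known result and cites \cite{Ber,Ber90,Hor}, where the Hadamard parametrix for the wave equation on a compact manifold is constructed. Your sketch reconstructs that classical argument and is essentially correct in outline: the geometric-optics ansatz built from Riesz kernels, the cancellation scheme producing transport equations for the $U_k$ along radial geodesics, the Duhamel closing for the remainder $C_K$ combined with finite speed of propagation, and the coarea-formula reduction of the $(N-1)$-dimensional pairing to a one-variable pairing against $\overline{g_{\theta_1}}$ all match the standard construction, whereas the paper simply invokes it. Two points you should sharpen. First, $L^2 = L_{\mathbf{A}} + \alpha^2$ is a magnetic Laplacian, not the pure Laplace--Beltrami operator; because it shares the same principal symbol the singularity structure and the distributional formulas are unchanged, but the transport recursion now carries the first-order magnetic terms and produces complex-valued $U_k$, and stating this is necessary to justify applying the cited Laplace--Beltrami results to the operator at hand. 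Second, the bookkeeping in the $N/2\in\N$ case slightly over-counts: iterating the Riesz identity $N/2-k$ times already yields $\bigl(\tfrac{1}{2s}\partial_s\bigr)^{N/2-k}\int_0^s\overline{g_{\theta_1}}(h)\,dh$, which equals the stated $\bigl(\tfrac{1}{2s}\partial_s\bigr)^{N/2-k-1}\bigl[\tfrac{1}{2s}\overline{g_{\theta_1}}(s)\bigr]$ upon evaluating the innermost derivative, so no ``one further application'' is needed. Relatedly, the initial data you must match are $E_K(0)=\delta$ and $\partial_s E_K(0)=0$; this is cleanest if you regard the ansatz as $\tfrac12\partial_s$ of a Riesz-kernel sum $G_K$ with $G_K(0)=0$, $\partial_s G_K(0)=\delta$, rather than normalizing $\partial_s E_K|_{s=0}$ to reproduce $\delta$ as you wrote.
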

\section{The proof of Theorem \ref{operator-family}}
In this section, we mainly focus on the proof of Theorem \ref{operator-family} inspired by the method in \cite{L}. We divide the kernel of the analytic operator $f_{w,t}(\LL_{A})$ into three parts: $I_1(w,t)$, $I_2(w,t)$ and $I_3(w,t)$, thus the proof of Theorem \ref{operator-family} boils down to verifying the following three lemmas.

In the following, we first fix $A_0=10^{-3}\min\{1,\rho_0\}$, then we have
\begin{lemma}\label{prop:I1}
There exists a constant $C(\varepsilon)>0$ such that for all $1\leq p\leq+\infty$
\begin{equation}\label{equ:t1est}
\big\|I_1(w,t)f\big\|_p\leq C(\varepsilon)\|f\|_p,\quad \forall\;f\in L^p(\R^N),
\end{equation}
where the integral kernel $I_1(w,t)$ is defined by
\begin{equation}\label{equ:t1wt}
I_1=t^{2(w-\frac{N}{2})}(r_1r_2)^{-w}\chi_{\{t>r_1+r_2\}}e^{i\pi(w-\frac{N-1}{2})}
\int_{\beta_2}^\infty (\cosh h-\cosh\beta_2)^{\alpha-w}
e^{-hL}\cos(\pi L)\;dh.
\end{equation}
\end{lemma}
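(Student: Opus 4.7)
The plan is to exploit two features of the far-field regime $t>r_1+r_2$: the integration variable satisfies $h\ge\beta_2>0$, and the spherical operator $L=\sqrt{L_{\A}+\alpha^2}$ has a strict spectral gap (since $\mu_1(\A)+\alpha^2>0$). Together these produce strong exponential decay of the Poisson-type factor $e^{-hL}$ inside the integral, which dominates every other factor in the kernel and permits a Minkowski/Schur-type reduction to a purely scalar radial estimate.

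First, I would establish a $p$-uniform spherical bound
\begin{equation*}
\|e^{-hL}\cos(\pi L)\|_{L^p(\mathbb{S}^{N-1})\to L^p(\mathbb{S}^{N-1})}\le C\, e^{-\delta_0 h/2},\qquad h\ge 0,
\end{equation*}
with $\delta_0:=\sqrt{\mu_1(\A)+\alpha^2}>0$. Writing $\cos(\pi L)=\tfrac12(e^{i\pi L}+e^{-i\pi L})$ reduces this to estimates on the complex-time Poisson-type semigroup $e^{-(h\pm i\pi)L}$ of $L$. Via subordination to the heat semigroup of $L_{\A}$ on the compact sphere, whose kernel satisfies Gaussian bounds and inherits the spectral gap of $L_{\A}$, one obtains pointwise kernel estimates giving the desired $L^p\to L^p$ bound.

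Next, by Minkowski's inequality on the sphere and in the $(r_2,h)$-variables, the $L^p(\R^N)$-norm of $I_1(w,t)f$ is controlled by a scalar double integral of the form
\begin{equation*}
|t|^{2\varepsilon-N}\int_0^t r_1^{N-1-\varepsilon}\int_0^{t-r_1} r_2^{N-1-\varepsilon}\Bigl(\int_{\beta_2}^{\infty}(\cosh h-\cosh\beta_2)^{\alpha-\varepsilon}e^{-\delta_0 h/2}\,dh\Bigr)dr_2\,dr_1
\end{equation*}
times $\|f\|_{L^p}$ (via a Schur-test argument on the radial kernel). The substitution $h=\beta_2+s$, combined with $\cosh(\beta_2+s)-\cosh\beta_2\sim \sinh(\beta_2)\cdot s$ near $s=0$, shows that the inner $dh$-integral is integrable (since $\alpha-\varepsilon>-1$ for $N\ge3$ and $\varepsilon<1$) and decays in $\beta_2$ like $e^{-c\beta_2}$. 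After the dilation $r_i=t\tilde r_i$, which absorbs the $t$-dependence, the remaining integral over $\{\tilde r_1+\tilde r_2<1\}$ is finite by elementary computation.

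The main technical obstacle is the first step: obtaining a $p$-uniform $L^p(\mathbb{S}^{N-1})$-bound for $e^{-hL}\cos(\pi L)$ with exponential decay. Since $\cos(\pi L)$ is not bounded on $L^p(\mathbb{S}^{N-1})$ for $p\neq 2$ in general, the Poisson factor $e^{-hL}$ with $h\ge\beta_2>0$ must do the smoothing; this requires verifying that the magnetic perturbation in $L_{\A}$ does not destroy Gaussian heat kernel bounds on the sphere. The positivity $\mu_1(\A)+\alpha^2>0$ is essential throughout, as it both makes $L$ invertible and supplies the exponential decay rate $\delta_0$.
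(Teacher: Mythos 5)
Your proposal has two concrete gaps, both stemming from the first step.

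\textbf{The claimed uniform bound on $e^{-hL}\cos(\pi L)$ is false.} You assert $\|e^{-hL}\cos(\pi L)\|_{L^p(\mathbb{S}^{N-1})\to L^p(\mathbb{S}^{N-1})}\leq Ce^{-\delta_0 h/2}$ for all $h\geq 0$, which is uniformly bounded as $h\to 0^+$. But $\cos(\pi L)$ is a half-wave-type operator on the $(N-1)$-dimensional sphere (the spectral symbol $\cos(\pi\sqrt{\xi+\alpha^2})$ is oscillatory and violates the Mikhlin--H\"ormander condition), and such operators are \emph{not} bounded on $L^p(\mathbb{S}^{N-1})$ for $p$ near $1$ or $\infty$ — they are only $L^p$-bounded in the Peral--Miyachi range $|1/p-1/2|\leq 1/(N-2)$. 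Since $e^{-hL}\cos(\pi L)\to\cos(\pi L)$ as $h\to 0$, the $L^p$ operator norm must blow up. The paper's Proposition~\ref{proof:I1} gets the correct bound, $\|e^{-hL}\cos(\pi L)\|_{p\to p}\leq Ce^{-h\alpha}(1+h^{\frac{1-N}{2}})$, via a Cauchy--Schwarz/Schur argument on the kernel combined with the on-diagonal heat kernel estimate $e^{uL_{\A}}(\theta_1,\theta_1)\lesssim u^{-(N-1)/2}$ and the subordination formula. The singular factor at $h=0$ is essential and cannot be dropped.

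\textbf{The $dh$-integral is not merely ``integrable and exponentially decaying''; it must produce a precise singular cancellation.} With the correct semigroup bound, the paper proves
$$\int_{\beta_2}^\infty e^{-\alpha h}\bigl(1+h^{\frac{1-N}{2}}\bigr)(\cosh h-\cosh\beta_2)^{\alpha-\varepsilon}\,dh\;\leq\;C(\cosh\beta_2-1)^{-\varepsilon},$$
treating $\beta_2\geq 1$ via Legendre-function asymptotics and $\beta_2<1$ by splitting into $[\beta_2,2\beta_2]$, $[2\beta_2,1+\beta_2]$, $[1+\beta_2,\infty)$. The factor $(\cosh\beta_2-1)^{-\varepsilon}$ equals $\bigl(\tfrac{t^2-(r_1+r_2)^2}{2r_1r_2}\bigr)^{-\varepsilon}$, which cancels $(r_1r_2)^{-\varepsilon}$ exactly, leaving $(t^2-(r_1+r_2)^2)^{-\varepsilon}$, and then the Schur integral $\sup_{r_1}\int_0^{t-r_1}t^{2\varepsilon-N}(t^2-(r_1+r_2)^2)^{-\varepsilon}r_2^{N-1}\,dr_2$ is bounded by $C/(1-\varepsilon)$. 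With your bound instead, the $dh$-integral would behave like $e^{-\delta_0\beta_2/2}$ for large $\beta_2$, i.e., like $(r_1r_2)^{\delta_0/2}$ for small $r_2$; carrying this through the radial Schur test yields a leftover factor $\rho_1^{\delta_0/2-\varepsilon}$ (with $\rho_1=r_1/t$), which diverges as $\rho_1\to 0$ whenever $\delta_0/2<\varepsilon$. Since $\varepsilon\in(\tfrac12,1)$ is prescribed and $\delta_0=\sqrt{\mu_1(\A)+\alpha^2}$ can be small (already for $N=3$ with $\mu_1(\A)=0$ one has $\delta_0/2=\tfrac14<\varepsilon$), your Schur test does not close. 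The polynomial decay rate $\varepsilon$ in $\beta_2$, extracted from the Legendre structure, is exactly what is needed — crude exponential decay with the spectral-gap rate is both too singular at $\beta_2=0$ (fine) and too weak at $\beta_2=\infty$ (fatal).

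A minor point: your ``double integral in $(r_1,r_2)$'' formulation of the Schur test is imprecise; one should bound $\sup_{r_1}\int(\cdots)\,r_2^{N-1}dr_2$ and the symmetric quantity, not a joint integral.
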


\begin{lemma}\label{prop:I2}
There exists a constant $C(\varepsilon)>0$ such that for all $1\leq p\leq+\infty$
\begin{equation}\label{equ:t21est}
\big\|I_{2}(w,t)f\big\|_p\leq C(\varepsilon)\|f\|_p,\quad \forall\;f\in L^p(\R^N),
\end{equation}
where the integral kernel $I_2(w,t)$ is denoted by
\begin{align}\label{proof:I2}
I_2=t^{2(w-\frac{N}{2})}(r_1r_2)^{-w}\chi\Big\{\cos
\beta_1=\tfrac{r_1^2+r_2^2-t^2}{2r_1r_2}>\cos A_0\Big\}\nonumber\\
\quad\quad\times\int_0^{\beta_1}(\cos h-\cos \beta_1)^{\alpha-w}\cos(hL)(\theta_1,\theta_2)\;dh.
\end{align}
\end{lemma}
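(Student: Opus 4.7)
The plan is to substitute the Hadamard parametrix of Lemma \ref{parametrix} into \eqref{proof:I2}. Since the cutoff forces $0<\beta_{1}<A_{0}<\rho_{0}$ and $0<h<\beta_{1}$, the expansion
$$\cos(hL)(\theta_{1},\theta_{2})=\sum_{k=0}^{K}U_{k}(\theta_{1},\theta_{2})\,h\,\frac{(h^{2}-d_{\mathbb{S}^{N-1}}^{2}(\theta_{1},\theta_{2}))_{+}^{k-\frac{N}{2}}}{\Gamma(k-\frac{N}{2}+1)}+hC_{K}(h,\theta_{1},\theta_{2})$$
is available with $K=N+1$ chosen large enough that $C_{K}$ is jointly continuous and supported in $\{d_{\mathbb{S}^{N-1}}(\theta_{1},\theta_{2})\leq h\}$. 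Feeding this into \eqref{proof:I2} decomposes $I_{2}$ into finitely many singular pieces $I_{2}^{(k)}$ plus a smooth remainder piece built from $C_{K}$.

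The heart of the argument is the explicit evaluation of the inner radial integral
$$\mathcal{J}_{k}(\beta_{1},d):=\int_{0}^{\beta_{1}}(\cos h-\cos\beta_{1})^{\alpha-w}\,h\,\frac{(h^{2}-d^{2})_{+}^{k-\frac{N}{2}}}{\Gamma(k-\frac{N}{2}+1)}\,dh,$$
with $d=d_{\mathbb{S}^{N-1}}(\theta_{1},\theta_{2})$. For $k\geq N/2$ I would carry out the change of variables $u=h^{2}$, use the factorization $\cos h-\cos\beta_{1}=2\sin\tfrac{\beta_{1}+h}{2}\sin\tfrac{\beta_{1}-h}{2}$ together with a Taylor expansion about the flat model $(\beta_{1}^{2}-h^{2})/2$, and reduce $\mathcal{J}_{k}$ to a Beta integral $B(\alpha-w+1,\,k-\tfrac{N}{2}+1)$ up to smooth corrections. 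For $k<N/2$ the integrand is distributional in $\theta_{2}$, so one pairs against a test function via the surface-average prescription of Lemma \ref{parametrix} and interchanges the distributional derivatives in $h$ with the outer integration; the same Beta-function identity then persists by analytic continuation in $k$. Since $\alpha-w+1=\tfrac{N}{2}-w$, the resulting factor $\Gamma(\tfrac{N}{2}-w)$ cancels exactly with the $\Gamma(\tfrac{N}{2}-w)^{-1}$ in the global prefactor of \eqref{kernel}, leaving only the much milder $\Gamma(k-w+1)^{-1}$.

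After this cancellation, the effective kernel of each $I_{2}^{(k)}$ takes the schematic form
$$K^{(k)}(x,y)\sim \frac{U_{k}(\theta_{1},\theta_{2})}{\Gamma(k-w+1)}\,t^{2(w-N/2)}(r_{1}r_{2})^{-w}(\beta_{1}^{2}-d^{2})_{+}^{k-w}.$$
Invoking the geometric identity $t^{2}-|x-y|^{2}=2r_{1}r_{2}(\cos d-\cos\beta_{1})$ and the small-angle comparison $\cos d-\cos\beta_{1}\asymp(\beta_{1}^{2}-d^{2})/2$ on the cutoff region $\beta_{1}<A_{0}$, one rewrites $K^{(k)}$ as a constant multiple of $t^{2k-N}(r_{1}r_{2})^{-k}(t^{2}-|x-y|^{2})_{+}^{k-w}$. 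For the leading term $k=0$, the bound $\operatorname{Re}w=\varepsilon\in(\tfrac{1}{2},1)$ makes $(t-|x-y|)_{+}^{-\varepsilon}$ locally integrable, and a polar-coordinate computation shows that $\sup_{x}\int|K^{(0)}(x,y)|\,dy$ and $\sup_{y}\int|K^{(0)}(x,y)|\,dx$ are both finite; Schur's test then delivers the $L^{p}\to L^{p}$ bound \eqref{equ:t21est} for every $1\leq p\leq\infty$. The terms with $k\geq 1$ are strictly better thanks to the extra $(r_{1}r_{2})^{-k}$ factor and the smoothness of $U_{k}$, while the remainder arising from $hC_{K}$ has a continuous, spatially localized kernel and is trivially bounded on every $L^{p}$.

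The main obstacle is the distributional step, namely justifying the Beta-function identity for $\mathcal{J}_{k}$ when $k-\tfrac{N}{2}$ is negative and generically non-integer: this forces one to expand via the surface-average formula, perform the $h$-integration in the distributional sense, and then reassemble the kernel without losing the crucial cancellation of $\Gamma(\tfrac{N}{2}-w)$. A secondary difficulty is to track the $|y|$-dependence of all constants through this process, using Stirling-type estimates for $|\Gamma(k-w+1)|^{-1}$ along the line $w=\varepsilon+iy(\tfrac{N+1}{2}-\varepsilon)$, so that the resulting bound is compatible with the $e^{-2N\pi|y|}$ factor needed in Theorem \ref{operator-family}.
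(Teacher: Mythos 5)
Your proposal follows the same overall strategy as the paper's proof: insert the Hadamard parametrix of Lemma \ref{parametrix} into the inner integral $\int_0^{\beta_1}(\cos h-\cos\beta_1)^{\alpha-w}\cos(hL)\,dh$, isolate the leading singular term $k=0$ from the harmless higher-order and error terms, arrive at the pointwise kernel bound $C\,t^{2(\varepsilon-\frac N2)}(r_1r_2)^{-\varepsilon}\bigl(\beta_1^2-d_{\mathbb{S}^{N-1}}^2(\theta_1,\theta_2)\bigr)^{-\varepsilon}\chi\{d_{\mathbb{S}^{N-1}}(\theta_1,\theta_2)<\beta_1\}$ (this is exactly Proposition \ref{lem:I2}), and finish with Schur's test. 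Two remarks on where your execution differs. First, the paper never evaluates $\mathcal{J}_k$ as a Beta function, and the cancellation of $\Gamma(\tfrac N2-w)$ that you call crucial is not needed for this lemma: the prefactor $\Gamma(\tfrac N2-w)^{-1}$ is kept outside the decomposition (see \eqref{equ:fwtt1t21}) and survives into the bound of Theorem \ref{operator-family}. All Lemma \ref{prop:I2} requires is a modulus bound, using $|(\cos h-\cos\beta_1)^{\alpha-w}|=(\cos h-\cos\beta_1)^{\alpha-\varepsilon}$ after the derivatives coming from the distributional powers have been transferred onto this factor; chasing exact constants through a Beta identity buys nothing here. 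Second, your Schur test, via $t^2-|x-y|^2=2r_1r_2(\cos d-\cos\beta_1)$ and a single polar-coordinate integral in $y$ around $x$, is cleaner than the paper's, which first integrates over $\mathbb{S}^{N-1}$ to produce $\beta_1^{N-1-2\varepsilon}$ and then splits the radial integral into the cases $r_1\geq t$ and $r_1<t$; both yield a constant depending only on $N,\varepsilon$ because $\varepsilon<1$.

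The one genuinely incomplete step is the one you flag yourself: the meaning and estimation of $\mathcal{J}_k$ when $k-\tfrac N2<0$, above all $k=0$. ``Analytic continuation in $k$'' is not literally available ($k$ is a discrete index), and the honest version is the prescription recorded in Lemma \ref{parametrix}: for $N$ even one applies $\bigl(\tfrac1{2h}\partial_h\bigr)^{\frac N2-k-1}$ to $(\cos h-\cos\beta_1)^{\alpha-w}$ and evaluates at $h=d$; for $N$ odd the function $(\cos h-\cos\beta_1)^{\alpha-w}$ is only finitely differentiable up to $h=\beta_1$, so one cannot integrate by parts all the way, and the paper splits the residual Riemann--Liouville integral at the midpoint $\tfrac{\beta_1+d}{2}$ (the terms $G_{0,1}$ and $G_{0,2}$), handling one half by a size estimate and the other by one further integration by parts in the variable $s=h^2$. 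Without such a splitting, your ``Beta integral up to smooth corrections'' does not follow, because the corrections are not smooth uniformly up to $h=\beta_1$ where the weight degenerates. Since you correctly identified this as the main obstacle and named the right mechanism (distributional integration by parts against the surface average), I would describe your proposal as the paper's argument with an acknowledged gap at its technical core, rather than as a different or flawed approach.
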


\begin{lemma}\label{prop:I3}
For all $1<p<+\infty,$ there exists a constant $C(p,\varepsilon)>0$ such that
\begin{equation}\label{equ:t22est}
\big\|I_{3}(w,t)f\big\|_p\leq C(p,\varepsilon)\frac{e^{2n\pi|y|}}{|\Gamma(\frac{n}{2}-w)|}\|f\|_p,\quad
\forall\;f\in L^p(\R^N),
\end{equation}
in which the integral kernel $I_3(w,t)$ is written by
\begin{equation}
I_3=t^{2(w-\frac{N}{2})}(r_1r_2)^{-w}\chi\Big\{
-1<\cos \beta_1=\tfrac{r_1^2+r_2^2-t^2}{2r_1r_2}\leq\cos A_0\Big\}
\sin^{\frac{N-1}{2}-w}\beta_1P_{L-\frac12}^{w-\frac{N-1}{2}}(\cos \beta_1).
\end{equation}
\end{lemma}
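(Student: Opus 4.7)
The plan, following the strategy of \cite{L}, is first to collapse the Legendre kernel appearing in $I_3$ into an integral of wave propagators on $\mathbb{S}^{N-1}$, and then to estimate the resulting weighted superposition piecewise. First I would apply the integral representation already recorded in \eqref{kernel},
\[
\sin^{\frac{N-1}{2}-w}\!\beta_1\, P_{L-\frac12}^{w-\frac{N-1}{2}}(\cos\beta_1) = \frac{\sqrt{2}}{\sqrt{\pi}\,\Gamma(\tfrac{N}{2}-w)}\int_0^{\beta_1}(\cos h-\cos\beta_1)^{\alpha-w}\cos(hL)(\theta_1,\theta_2)\,dh,
\]
which is valid since $\mathrm{Re}(\alpha-w) = \tfrac{N-2}{2}-\varepsilon > -1$. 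Substituting this into the definition of $I_3$ extracts the prefactor $1/\Gamma(\tfrac{N}{2}-w)$ and reduces the lemma to bounding, uniformly in $(t,r_1,r_2)$ on the set $\beta_1\in[A_0,\pi)$, a weighted integral of $\cos(hL)$ acting on $L^p(\mathbb{S}^{N-1})$.

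Next I would split $\int_0^{\beta_1} = \int_0^{A_0}+\int_{A_0}^{\beta_1}$. On the small-$h$ range I substitute the Hadamard expansion \eqref{equ:cosslmm} for $\cos(hL)$; swapping the order of integration and invoking the distributional formulas from Lemma~\ref{parametrix} recasts each leading term as an $h$-integral of spherical-mean-type operators against the weight $(\cos h-\cos\beta_1)^{\alpha-w}$, which has an integrable singularity at $h=\beta_1$ by the same real-part bound. The radial-angular decomposition of $L^p(\R^N)$, combined with the constraint $\cos\beta_1 = (r_1^2+r_2^2-t^2)/(2r_1r_2)$ and standard Calder\'on--Zygmund / Young-type estimates, then delivers $L^p$-boundedness for $1<p<\infty$. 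On the larger range $h\in[A_0,\beta_1]$, where the parametrix does not apply directly, I would iterate the addition formula $\cos((h_1+h_2)L) = \cos(h_1L)\cos(h_2L)-\sin(h_1L)\sin(h_2L)$ to decompose any $h<\pi$ into $O(1)$ subintervals of length at most $A_0$, reducing once more to compositions of parametrix-controlled cosine and sine propagators on $L^p(\mathbb{S}^{N-1})$.

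The final bookkeeping step is to track the imaginary part $y$ through the ratios of Gamma functions and the spectral factors produced above, showing that the $y$-dependence is absorbed by the claimed factor $e^{2N\pi|y|}$; this uses the classical asymptotic $|\Gamma(s+iy)|\sim e^{-\pi|y|/2}|y|^{s-1/2}$ on vertical lines to balance the various Gamma weights. The principal obstacle will be controlling $\cos(hL)$ on $\mathbb{S}^{N-1}$ when $h$ approaches $\pi$: at the antipodal conjugate points the local parametrix fails and the propagator may exhibit substantial $L^p$-growth. Combined with the near-singular weight and the exponential blow-up in $y$, this bookkeeping is the delicate part; the restriction $1<p<\infty$, as opposed to the broader range in Lemmas~\ref{prop:I1} and \ref{prop:I2}, is precisely what permits the singular integral estimates in the small-$h$ step.
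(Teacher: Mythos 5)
Your first step (the Mehler--Dirichlet representation of the Legendre kernel) and your small-$h$ treatment are fine, but the large-$h$ step contains a genuine gap, and it is exactly the point where the paper switches methods. You propose to handle $h\in[A_0,\beta_1]$ by iterating the addition formula and reducing to ``compositions of parametrix-controlled cosine and sine propagators'' at times $\leq A_0$. However, for a fixed $h_0>0$ the operators $\cos(h_0L)$ and $\sin(h_0L)$ are Fourier integral operators of order zero on the $(N-1)$-dimensional sphere and are \emph{not} bounded on $L^p(\mathbb{S}^{N-1})$ for $p\neq2$: they lose $(N-2)\lvert 1/p-1/2\rvert$ derivatives. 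In Lemma~\ref{prop:I2} this loss is absorbed only because the propagator is integrated against the weight $(\cos h-\cos\beta_1)^{\alpha-w}$, which supplies about $\tfrac N2-\varepsilon$ derivatives of smoothing in the single time variable; once you split $\cos((h_1+h_2)L)$ into products, the weight attaches to only one factor and the remaining unsmoothed propagators destroy the estimate. So the reduction does not close.

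The paper's route for $I_3$ is structurally different and avoids this entirely: the Legendre function is factorized as in \eqref{equ:mwanaotf}, $M(w,\beta_1;s)=U(w,\beta_1;s)V(w,\beta_1;s)-U(w,-\beta_1;s)V(w,-\beta_1;s)$, where $V(w,\pm\beta_1;\cdot)$ is a zero-order symbol whose derivatives are controlled in Lemma~\ref{lem:V} and whose functional calculus is bounded by the spectral multiplier theorem (Lemma~\ref{spectralmulti}), while $U(w,\pm\beta_1;L)$ is essentially $\Gamma(\tfrac N2-w)^{-1}L^{-\alpha}e^{\mp i\beta_1L}$, i.e.\ the half-wave operator presmoothed by exactly $\alpha=\tfrac{N-2}{2}$ derivatives, which Sogge's oscillatory multiplier theorem bounds on $L^p(\mathbb{S}^{N-1})$ for $1<p<\infty$. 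This is precisely where the restriction $1<p<\infty$ and the gain $1/\lvert\Gamma(\tfrac N2-w)\rvert$ in \eqref{equ:t22est} come from; the proof is then completed by the radial Schur test of Lemma~\ref{lem:aim}, in which the antipodal degeneracy $\beta_1\to\pi$ (which you correctly flag but do not resolve) is handled quantitatively through the factor $(\pi-\beta_1)^{1/2-\varepsilon}$ when $N=3$. To repair your argument you would need to replace the addition-formula step by some such factorization of the kernel into an $L^p$-bounded oscillatory piece and a multiplier piece.
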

Indeed, from \eqref{kernel}, we observe that
\begin{equation}\label{equ:fwtt1t21}
f_{w,t}(\LL_{\A})=\frac{1}{\sqrt{2\pi}\Gamma(\frac{N}{2}-w)}\big[I_1(w,t)+I_{2}(w,t)\big]
+\frac12I_{3}(w,t).
\end{equation}
Hence, it follows that the validity of three lemmas above is essential. Once these lemmas are proven, Theorem \ref{operator-family} can be derived immediately. Now, we turn our attentions to showing Lemma \ref{prop:I1}, Lemma \ref{prop:I2} and Lemma \ref{prop:I3} respectively.
\subsection{Proof of Lemma \ref{prop:I1}}
Before we begin to prove the Lemma \ref{prop:I1}, it is necessary to estimate the kernel of the operator $e^{-hL}\cos(\pi L)(h>0)$, so we first establish the following proposition which is crucial to demonstrate Lemma \ref{prop:I1}.
\begin{proposition}\label{proof:I1}
Let $H_h(\theta_1,\theta_2)$ be the kernel of the operator $e^{-hL}\cos(\pi L)(h>0)$. Set
\begin{equation}\label{equ:ehlsubast}
\big\|e^{-hL}\cos(\pi L)\big\|_\ast:=\sup_{\theta_1\in\mathbb{S}^{N-1}}\int_{\mathbb{S}^{N-1}}\big|H_h(\theta_1,\theta_2)\big|\;d\theta_2,
\end{equation}
and
\begin{equation}\label{equ:ehlsupast}
\big\|e^{-hL}\cos(\pi L)\big\|^\ast:=\sup_{\theta_2\in\mathbb{S}^{N-1}}\int_{\mathbb{S}^{N-1}}\big|H_h(\theta_1,\theta_2)\big|\;d\theta_1.
\end{equation}
Then, there exists a constant $C>0$ such that for all $1\leq p\leq+\infty$ and $h>0$, there holds
\begin{align}\nonumber
 \big\|e^{-hL}\cos(\pi L)\big\|_{p\to p} \leq&\max\big\{  \big\|e^{-hL}\cos(\pi L)\big\|_\ast, \big\|e^{-hL}\cos(\pi L)\big\|^\ast\big\}\\\label{equ:eppcosest}
 \leq& Ce^{-h\alpha}\big(1+h^{\frac{1-n}{2}}\big).
\end{align}
Therefore, for all $\beta_2>0$, we also can obtain
\begin{align}\nonumber
& \int_{\beta_2}^\infty\big[\big\|e^{-hL}\cos(\pi L)\big\|_\ast+\big\|e^{-hL}\cos(\pi L)\big\|^\ast\big]\cdot
\big|(\cosh h-\cosh \beta_2)^{\alpha-w}\big|\;dh\\\label{equ:kerahaig}
\leq&C\cdot (\cosh \beta_2-1)^{-{\rm Re}w}.
\end{align}
\end{proposition}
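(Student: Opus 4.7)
The plan is to split the argument into a pointwise kernel estimate for $H_h$, then pass via Schur's test to the $L^p\to L^p$ bound \eqref{equ:eppcosest}, and finally derive \eqref{equ:kerahaig} by a change of variables. The starting identity is
\begin{equation*}
e^{-hL}\cos(\pi L)=\mathrm{Re}\bigl(e^{-(h-i\pi)L}\bigr),
\end{equation*}
which recasts the problem as estimating the kernel of the holomorphic Poisson-type semigroup $e^{-zL}$ at the complex parameter $z=h-i\pi$ (so $\mathrm{Re}\,z=h>0$). Since the transversality condition \eqref{transcondition} forces $L_{\mathbf{A}}\ge 0$ as a form on $\mathbb{S}^{N-1}$, one has $L\ge\alpha$, and this pointwise lower bound will be the source of the prefactor $e^{-h\alpha}$.

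The main tool is the Poisson subordination identity $e^{-z\lambda}=\tfrac{2z}{\pi}\int_0^\infty\cos(s\lambda)/(z^2+s^2)\,ds$ (valid for $\lambda\ge 0$, $\mathrm{Re}\,z>0$), which turns the problem into an $s$-integral of $\cos(sL)$. For $0<s<A_0$ the kernel of $\cos(sL)(\theta_1,\theta_2)$ is given by the Hadamard parametrix (Lemma \ref{parametrix}), while for $s\ge A_0$ one uses the uniform spectral bound $\|\cos(sL)\|_{2\to 2}\le 1$ together with the rapid decay of the weight $|z^2+s^2|^{-1}\lesssim s^{-2}$. Combining these regimes with the extracted spectral factor $e^{-h\alpha}$ should yield a pointwise bound
\begin{equation*}
|H_h(\theta_1,\theta_2)|\le C\,e^{-h\alpha}\,\Psi_h\bigl(d_{\mathbb{S}^{N-1}}(\theta_1,-\theta_2)\bigr),
\end{equation*}
where $\Psi_h$ is a Poisson-type profile with $\|\Psi_h\|_{L^1(\mathbb{S}^{N-1})}=O(1+h^{(1-N)/2})$. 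Integrating in $\theta_2$ (respectively in $\theta_1$) and using the self-adjointness of $L$ yield the symmetric bounds \eqref{equ:ehlsubast}--\eqref{equ:ehlsupast}, and Schur's test then delivers \eqref{equ:eppcosest} for every $1\le p\le\infty$.

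For the integrated inequality \eqref{equ:kerahaig}, inserting \eqref{equ:eppcosest} into the left-hand side reduces matters to
\begin{equation*}
\int_{\beta_2}^\infty e^{-h\alpha}\bigl(1+h^{(1-N)/2}\bigr)(\cosh h-\cosh\beta_2)^{\alpha-\varepsilon}\,dh.
\end{equation*}
The change of variables $u=\cosh h-\cosh\beta_2$, $du=\sinh h\,dh$, together with the comparison $\sinh h\asymp\sqrt{(\cosh h-1)(\cosh h+1)}$, separates a near-endpoint piece from a tail controlled by $e^{-h\alpha}$. A direct computation then bounds the result by $C(\cosh\beta_2-1)^{-\varepsilon}=C(\cosh\beta_2-1)^{-\mathrm{Re}\,w}$, which is exactly the claim.

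The main obstacle is the first step. Because $\pi$ coincides with the injectivity radius of $\mathbb{S}^{N-1}$, the wavefront of $\cos(\pi L)(\theta_1,\cdot)$ focuses exactly at the antipode $-\theta_1$, and it is this focusing singularity (smoothed by the Poisson-type operator $e^{-hL}$) that produces the short-time blow-up $h^{(1-N)/2}$. Since Lemma \ref{parametrix} only describes $\cos(sL)$ for propagation times $s<A_0\ll\pi$, one cannot write down $\cos(\pi L)$ directly from the parametrix; the point of the subordination identity above is that only short-time parametrix information is ever used while large-$s$ behavior is absorbed into the decay of the weight $(z^2+s^2)^{-1}$. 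Tracking the leading Hadamard amplitude $U_0$ at the antipodal locus — where the magnetic potential $\mathbf{A}$ affects only lower-order corrections to $L$ — is the technical heart of the argument, closely paralleling the unperturbed analysis in \cite{L}.
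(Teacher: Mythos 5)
Your approach to \eqref{equ:eppcosest} is genuinely different from the paper's and, as written, has a gap. You propose to combine a pointwise Hadamard-parametrix description of $\cos(sL)$ for $s<A_0$ with the operator-norm bound $\|\cos(sL)\|_{2\to 2}\le 1$ for $s\ge A_0$, and then claim this "should yield a pointwise bound" $|H_h(\theta_1,\theta_2)|\le Ce^{-h\alpha}\Psi_h\big(d_{\mathbb{S}^{N-1}}(\theta_1,-\theta_2)\big)$. This step does not follow: an $L^2\to L^2$ operator bound on the large-$s$ piece of the subordination integral gives no pointwise information about the corresponding piece of the Schwartz kernel of $e^{-zL}$. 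On a compact manifold, converting an operator-norm bound into a kernel bound requires Hilbert--Schmidt (or trace-class) control, not just $\|\cdot\|_{2\to 2}$, and you have not supplied that. Moreover, the focusing of $\cos(sL)$ at the antipode for $s$ near $\pi$, which you correctly identify as the danger, lives precisely in this large-$s$ regime and so is exactly where your argument is non-quantitative. The extraction of the prefactor $e^{-h\alpha}$ is also left implicit: the subordination formula as you have written it produces $e^{-z\lambda}$, not $e^{-h\alpha}e^{-z(\lambda-\alpha)}$, and rewriting to make the shift visible requires re-deriving the identity for the shifted generator.

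The paper avoids all of this by never analyzing the kernel of $\cos(\pi L)$ at all. It expands $H_h$ in the orthonormal eigenbasis, bounds $|\cos(\pi\sqrt{\lambda_j+\alpha^2})|\le 1$, and observes that, since $\mathbb{S}^{N-1}$ has finite volume, Cauchy--Schwarz reduces the Schur-test quantity $\int_{\mathbb{S}^{N-1}}|H_h(\theta_1,\theta_2)|\,d\theta_2$ to $\big(\int_{\mathbb{S}^{N-1}}|H_h(\theta_1,\theta_2)|^2\,d\theta_2\big)^{1/2}$, which by Parseval equals an \emph{on-diagonal} Poisson kernel $\big(e^{-\alpha h}e^{-hL}(\theta_1,\theta_1)\big)^{1/2}$. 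That diagonal quantity is controlled by the standard subordination of the Poisson semigroup to the heat semigroup together with the Weyl-type on-diagonal heat bound $e^{tL_{\A}}(\theta_1,\theta_1)\le t^{-(N-1)/2}$. No parametrix, no antipodal analysis, and the $e^{-h\alpha}$ factor drops out of $\sqrt{\lambda_j+\alpha^2}\ge\alpha$ directly. If you want to keep a pointwise route, you would need to show that the large-$s$ contribution to the complex-time Poisson kernel is a bounded (indeed smooth) kernel uniformly in $h\in(0,1]$, which is a nontrivial additional argument; alternatively, adopt the Cauchy--Schwarz reduction, which is both shorter and unconditional on a compact base.

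For \eqref{equ:kerahaig}, your change of variables $u=\cosh h-\cosh\beta_2$ is a reasonable alternative to the paper's case split (on $\beta_2\gtrless 1$ and a three-way subdivision of the $h$-integral), but as stated it is only a plan: near $h=\beta_2$ you must track the competing factors $u^{\alpha-\varepsilon}$, $du/\sinh h$, and $h^{(1-N)/2}$ in the regime $\beta_2\to 0$, and the paper's explicit splitting is doing precisely that bookkeeping. You should carry this out, or at least note that for $\beta_2\ge 1$ a single closed-form evaluation via Legendre functions of the second kind (as the paper does) already gives the answer, leaving only the small-$\beta_2$ regime to estimate by hand.
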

\begin{proof}
We first show \eqref{equ:eppcosest}. By spectral theorem, we can write
$$H_h(\theta_1,\theta_2)=\sum_{j=0}^{+\infty}\cos\big(\pi\sqrt{\lambda_j+\alpha^2}
\big)e^{-h\sqrt{\lambda_j+\alpha^2}}\psi_j(\theta_1)\overline{\psi_j(\theta_2)},$$
where $0=\lambda_0<\lambda_1\leq \lambda_2\leq\cdots$ is the sequence of the eigenvalues of $L_{\A}$ and $\psi_j$ is a normalized eigenfunction corresponding to $\lambda_j$, see \eqref{eigenfunction} for detailed discussion.

We remark that $H_h(\theta_1,\theta_2)=\overline{H_h(\theta_2,\theta_1)}$ and $\mathbb{S}^{N-1}$ is compact manifold, from Shur'test Lemma, it suffices to certify that
$$\sup_{\theta_1\in \mathbb{S}^{N-1}}\int_{\mathbb{S}^{N-1}}\big|H_h(\theta_1,\theta_2)\big|^2\;d\theta_2<\big\{C\cdot e^{-h\alpha}\big[1+h^{\frac{1-n}{2}}\big]\big\}^2.$$
Note that
\begin{equation}
e^{-yA}=\frac{1}{2}y\pi^{-\frac{1}{2}}\int_{0}^{\infty}e^{-\frac{y^2}{4t}}e^{-tA^2}t^{-\frac{3}{2}}\;dt,
\end{equation}
and
\begin{equation}
e^{tL_{A}}(\theta_1,\theta_1)\leq t^{-\frac{N-1}{2}},
\end{equation}
where $e^{uL_{A}}(\theta_1,\theta_1)$ satisfies the heat kernel estimate on sphere $\mathbb{S}^{N-1}$ from the Weyl's formula.
Then, we can acquire
\begin{align*}
\int_{\mathbb{S}^{N-1}}\big|H_h(\theta_1,\theta_2)\big|^2\;d\theta_2\leq&\sum_{j=0}^{+\infty}
\cos^2\big(\pi\sqrt{\lambda_j+\alpha^2}\big)e^{-2h\sqrt{\lambda_j+\alpha^2}}
|\psi_j(\theta_1)|^2\\
\leq&e^{-\alpha h}\sum_{j=0}^{+\infty}e^{-h\sqrt{\lambda_j+\alpha^2}}|\psi_j(\theta_1)|^2\\
=&e^{-\alpha h}e^{-h\sqrt{L_{A}+\alpha^2}}(\theta_1,\theta_1)\\
=&e^{-\alpha h}\frac{h}{2\sqrt{\pi}}\int_0^\infty u^{-\frac32}e^{-\frac{h^2}{4u}}e^{-\alpha^2u}e^{u L_{A}}(\theta_1,\theta_1)\;du\\
\leq&C\cdot e^{-2h\alpha}\big[1+h^{-(N-1)}\big],
\end{align*}
which implies \eqref{equ:eppcosest}.

Next, we focus on verifying \eqref{equ:kerahaig}. By ${\rm Re}w=\varepsilon,$ and \eqref{equ:eppcosest}, it suffices to show that there exists a constant $C>0$ such that for all $\beta_2>0$, the following inequality holds
\begin{equation}\label{equ:omegest}
F:=\int_{\beta_2}^\infty e^{-\alpha h}(1+h^\frac{1-n}{2})(\cosh h-\cosh\beta_2)^{\alpha-\varepsilon}\;dh\leq C\cdot(\cosh \beta_2-1)^{-\varepsilon}.
\end{equation}
We will now proceed to establish the above estimate in the two cases: $\ma\geq1$ and $\ma<1$.

{\bf Case 1: $\ma\geq1$.} First, we note that
\begin{equation}
\mathcal{D}_{\bar{w}}^\ell=e^{i\pi\ell}2^{-\bar{w}-1}
\sqrt{\pi}\frac{\Gamma(\bar{w}+\ell+1)}{\Gamma(\frac{3}{2}+\bar{w})}
z^{-\bar{w}-1}\big(1+o(1)\big),\quad\forall\;|z|\gg1,
\end{equation}
where $\mathcal{D}_{\bar{w}}^\ell$ denotes Legendre function of the second kind, see \cite[p.186]{Mag}. Therefore, along with the fact that $\tfrac12<\varepsilon<1$ and
$N\geq3$, we can compute
\begin{align*}
F\leq & 2\int_{\ma}^\infty e^{-h\alpha}(\cosh h-\cosh\ma)^{\alpha-\varepsilon}\;dh\\
=&2\big(\tfrac{\pi}2\big)^{-\frac12}e^{-i\pi(\varepsilon-\frac{N-1}{2})}
\Gamma\big(\tfrac{N}2-\varepsilon\big)(\sinh \ma)^{\frac{N-1}{2}-\varepsilon}\mathcal{D}_{\alpha-\frac12}^{\varepsilon-\frac{N-1}{2}}
(\cosh\ma)\\
\leq & C\cdot(\sinh \ma)^{\frac{N-1}{2}-\varepsilon}(\cosh \ma)^{-\frac{N-1}{2}}\\
\leq&C\cdot(\cosh \ma)^{\frac{N-1}{2}-\varepsilon}(\cosh \ma)^{-\frac{N-1}{2}}\\
\leq&C\cdot(\cosh\ma-1)^{-\varepsilon}.
\end{align*}
This concludes the proof of case 1 where $\beta_2\geq1$.

{\bf Case 2: $0<\ma<1.$} This case is more complicated than the previous one where $\beta_2\geq1$. According to the interval of integral, we divide $F$ into three terms: $F_1$, $F_2$ and $F_3$, which are respectively represented by
\begin{equation}
F_1\triangleq\int_{\ma}^{2\ma}e^{-\alpha h}(1+h^\frac{1-N}{2})(\cosh h-\cosh\ma)^{\alpha-\varepsilon}\;dh,
\end{equation}
\begin{equation}
F_2\triangleq\int_{2\ma}^{1+\ma}e^{-\alpha h}(1+h^\frac{1-N}{2})(\cosh h-\cosh\ma)^{\alpha-\varepsilon}\;dh,
\end{equation}
and
\begin{equation}
F_3\triangleq\int_{1+\ma}^{\infty} e^{-\alpha h}(1+h^\frac{1-N}{2})(\cosh h-\cosh\ma)^{\alpha-\varepsilon}\;dh.
\end{equation}
Therefore, the goal of subsequent analysis is to estimate the three terms: $F_1$, $F_2$ and $F_3$.

We first conosider the term $F_3$. Set $h=\beta_2+s$, then we can achieve
\begin{align*}
F_3 \lesssim& 2\int_{1+\ma}^\infty  e^{-\alpha h}(\cosh h-\cosh\ma)^{\frac{N}{2}-\varepsilon}\;dh\\
=&2e^{-\alpha\ma}\int_1^\infty e^{-\alpha s}\big(\cosh(\ma+s)-\cosh\ma\big)^{\alpha-\varepsilon}\;ds\\
=&2e^{-\alpha\ma}\int_1^\infty e^{-\alpha s}\Big[2\sinh\frac{2\ma+s}{2}\sinh\frac{s}{2}\Big]^{\alpha-\varepsilon}\;ds\\
\lesssim&C\int_1^\infty e^{-\alpha s}[e^s]^{\alpha-\varepsilon}\;ds\\
\lesssim&C\frac{1}{\varepsilon}\leq C\cdot\big(\cosh\ma-1\big)^{-\varepsilon}.
\end{align*}
Moreover, when $0<\ma\leq1$, we observe that
$$\ma^2\sim 4\sinh^2\frac{\ma}{2}=2\big(\cosh\ma-1\big).$$
Hence, by $\frac12<\varepsilon<1$, we obtain
\begin{align*}
F_2\lesssim & C\int_{2\ma}^{1+\ma}h^{1-N}\Big(2\sinh\frac{h+\ma}{2}\cdot\sinh\frac{h-\ma}{2}\Big)^{\alpha-\varepsilon}\;dh\\
\lesssim&C\int_{2\ma}^{1+\ma}h^{1-N}\cdot(h^2)^{\alpha-\varepsilon}\;dh\\
\lesssim&C\frac{1}{\varepsilon}\ma^{-2\varepsilon}
\lesssim C\cdot\left(\cosh\ma-1\right)^{-\varepsilon},
\end{align*}
provided that $\sinh\frac{h+\ma}{2}\sim\frac{h+\ma}{2}\sim h$ and $\sinh\frac{h-\ma}{2}\sim\frac{h-\ma}{2}\sim h$ in the case $2\ma\leq h\leq1+\ma$.

On the other hand, we can calculate
\begin{align*}
F_1\lesssim & C\int_{\ma}^{2\ma}h^{1-N}\Big(2\sinh\frac{h+\ma}{2}\cdot\sinh\frac{h-\ma}{2}\Big)^{\alpha-\varepsilon}\;dh\\
\lesssim&C\ma^{1-N}\ma^{\alpha-\varepsilon}\int_{\ma}^{2\ma}(h-\ma)^{\alpha-\varepsilon}\;dh\\
=&C\frac{1}{\frac{N}{2}-\varepsilon}\cdot\ma^{-2\varepsilon}\\
\lesssim&C\cdot\big(\cosh\ma-1\big)^{-\varepsilon}.
\end{align*}
in which we used the fact that $\sinh\frac{h+\ma}{2}\sim\frac{h+\ma}{2}\sim\ma$ and $\sinh\frac{h-\ma}{2}\sim\frac{h-\ma}{2}$ when $\ma\leq h\leq2\ma$.
Therefore, we complete the proof of \eqref{equ:kerahaig} and Proposition \ref{proof:I1} is valid.
\end{proof}
\textbf{Proof of Lemma \ref{prop:I1}:}
Now, we begin to prove Lemma \ref{prop:I1}. Let $I_1\big((r_1,\theta_1),(r_2,\theta_2)\big):=I_1(w,t)\big((r_1,\theta_1),(r_2,\theta_2)\big)$ be defined by \eqref{equ:t1wt}. Based on Riesz's convex theorem, it is sufficient to demonstrate that there exists a constant $C(\varepsilon)>0$ such that for all $t>0$ and $w=\varepsilon+iy\big(\tfrac{N+1}2-\varepsilon\big)$, the following calculations hold
\begin{align}\label{equ:aim}
\sup_{(r_1,\theta_1)\in \R^N}\int_{\R^N}\big|I_1\big((r_1,\theta_1),(r_2,\theta_2)\big)\big|\;dy<C(\varepsilon),
\end{align}
\begin{align}
\sup_{(r_2,\theta_2)\in \R^N}\int_{\R^N}\big|I_1\big((r_1,\theta_1),(r_2,\theta_2)\big)\big|\;dx<C(\varepsilon).
\end{align}

Observe that
$$\big|I_1\big((r_1,\theta_1),(r_2,\theta_2)\big)\big|
=\big|I_1\big((r_2,\theta_2),(r_1,\theta_1)\big)\big|,$$
thus, we only need to consider \eqref{equ:aim}. In fact, for $x,y\in \R^N$ and $t>r_1+r_2$, a straightforward calculation yields
\begin{align*}
&\int_{\R^N}\big|I_1\big((r_1,\theta_1),(r_2,\theta_2)\big)\big|\;dy\\
=&\int_{0}^{\infty}\Big\{\int_{\mathbb{S}^{N-1}}\Big|t^{2(w-\frac{N}{2})}(r_1r_2)^{-w}
\int_{\beta_2}^\infty (\cosh h-\cosh\beta_2)^{\alpha-w}
e^{-hL}\cos(\pi L)\;dh\Big|\times \;d\theta_2\Big\}r_2^{N-1}\;dr_2\\
\lesssim&\int_{0}^{\infty} t^{2({\rm Re}w-\frac{N}{2})}(r_1r_2)^{-{\rm Re}w}\Big[\int_{\beta_2}^\infty\big\|e^{-hL}\cos(\pi L)\big\|_\ast(\cosh h-\cosh \beta_2)^{\alpha-{\rm Re}w}\;dh\Big] r_2^{N-1}\;dr_2\\
\lesssim&C\int_{0}^{\infty} t^{2({\rm Re}w-\frac{N}{2})}(r_1r_2)^{-{\rm Re}w}\cdot(\cosh\beta_2-1)^{-{\rm Re}w}r_2^{N-1}\;dr_2\\
=&C(\varepsilon)\int_{0}^{\infty}t^{2(\varepsilon-\frac{N}{2})}
\big[t^2-(r_1+r_2)^2\big]^{-\varepsilon}r_2^{N-1}\;dr_2\\
\lesssim&t^{2(\varepsilon-\frac{N}{2})}t^{N-1}t^{-\varepsilon}\int_0^{t-r_1}
\big[t-(r_1+r_2)\big]^{-\varepsilon}\;dr_2
\lesssim\frac{1}{1-\varepsilon}.
\end{align*}
where we employed the fact that $\beta_2={\rm arc}\cosh\frac{t^2-r_1^2-r_2^2}{2r_1r_2}$, as well as the condition $\tfrac12<\varepsilon<1$ and Proposition \ref{proof:I1}.

So far, we have finished the proof of Lemma \ref{prop:I1}.

\subsection{Proof of Lemma \ref{prop:I2}}
The purpose of this section is to deduce Lemma \ref{prop:I2}. Unlike the previous proof, this requires the use of fundamental solutions, thus making the situation more intricate. The overall proof strategy is similar to that in \cite{L}. For the sake of completeness and fluency of the article, we will provide the detailed proof once more.

In order to show Lemma \ref{prop:I2}, we start by estimating its kernel.
\begin{proposition}\label{lem:I2}
Let $I_{2}\big((r_1,\theta_1),(r_2,\theta_2)\big)$ be given by \eqref{proof:I2} with ${\rm Re}w=\varepsilon\in\big(\tfrac12,1\big).$ Then, there exists a constant $C(N,\varepsilon)>0$ such that
\begin{align}\nonumber
\big| I_{2}\big((r_1,\theta_1),(r_2,\theta_2)\big) \big|\leq & C(N,\varepsilon
)t^{2(\varepsilon-\frac{N}{2})}(r_1r_2)^{-\varepsilon}\chi\left\{r_1,r_2>0;
\cos \beta_1=\tfrac{r_1^2+r_2^2-t^2}{2r_1r_2}>\cos
A_0\right\}\\\label{equ:t21kernesta}
&\times \left(\beta_1 ^2-d_{\mathbb{S}^{N-1}}^2(\theta_1,\theta_2)\right)^{-\varepsilon}\chi\left\{\theta_1,\theta_2\in \mathbb{S}^{N-1}; d_{\mathbb{S}^{N-1}}(\theta_1,\theta_2)<\beta_1\right\}.
\end{align}
\end{proposition}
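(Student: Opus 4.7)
The plan is to substitute the Hadamard parametrix of Lemma \ref{parametrix} into the inner integral
$$\int_0^{\beta_1}(\cos h-\cos\beta_1)^{\alpha-w}\cos(hL)(\theta_1,\theta_2)\,dh,$$
thereby decomposing $I_2$ into a finite sum (over $k=0,1,\ldots,K$ with $K\geq N+1$) of terms of the form
$$U_k(\theta_1,\theta_2)\int_0^{\beta_1}(\cos h-\cos\beta_1)^{\alpha-w}\,h\,\frac{(h^2-d_{\mathbb{S}^{N-1}}^2(\theta_1,\theta_2))_+^{k-N/2}}{\Gamma(k-N/2+1)}\,dh,$$
plus a remainder coming from $hC_K(h,\theta_1,\theta_2)$. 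The $k=0$ contribution will produce the claimed singular factor $(\beta_1^2-d_{\mathbb{S}^{N-1}}^2)^{-\varepsilon}$; the remaining $k\geq 1$ terms and the remainder will be controlled by less singular powers of $\beta_1^2-d_{\mathbb{S}^{N-1}}^2$.

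The heart of the calculation is the explicit evaluation of the inner integral. Because $\beta_1<A_0\ll 1$, we expand $\cos h-\cos\beta_1=\tfrac12(\beta_1^2-h^2)+O(\beta_1^4)$ so that, up to a bounded factor and a smooth error, the integral reduces to one of the form
$$\int_0^{\beta_1}(\beta_1^2-h^2)^{\alpha-w}(h^2-d^2)_+^{k-N/2}\,h\,dh$$
where $d=d_{\mathbb{S}^{N-1}}(\theta_1,\theta_2)$. The substitution $u=h^2$ followed by $u=d^2+(\beta_1^2-d^2)s$ converts this into a Beta integral; dividing by $\Gamma(k-N/2+1)$ and using $\alpha-w+k-N/2+1=k-w$ (recall $\alpha=(N-2)/2$) yields a closed form
$$\frac{1}{2}\cdot\frac{\Gamma(\alpha-w+1)}{\Gamma(k-w+1)}\cdot(\beta_1^2-d^2)^{k-w}.$$
Taking absolute values, the $k=0$ term has size comparable to $(\beta_1^2-d^2)^{-\varepsilon}$, which is precisely the singularity in \eqref{equ:t21kernesta}; the Gamma quotient is bounded uniformly in $y\in\mathbb{R}$ because $\mathrm{Re}\,w=\varepsilon\in(\tfrac12,1)$, and the factors $U_k$ and $\alpha-w$ remain under control.

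The main technical obstacle is rigorously justifying the Beta-integral calculation when $k\leq N/2-1$, since then $(h^2-d^2)^{k-N/2}$ is only a distribution. The remedy is built into Lemma \ref{parametrix}: testing against $\overline{g_{\theta_1}}(h)$ amounts to replacing the naive integral by a suitable number of applications of $(\tfrac{1}{2h}\partial_h)$ acting on the smooth factor $(\cos h-\cos\beta_1)^{\alpha-w}/h$. Each such derivative lowers the exponent of $\beta_1^2-h^2$ by one unit and produces a power of $h$, so the Beta structure of the integral is preserved; equivalently, the answer is the analytic continuation in the second Beta parameter, and the explicit weight $1/\Gamma(k-N/2+1)$ cancels the simple poles of $\Gamma$ at non-positive integers, making the closed form above meaningful for every $k$.

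Finally, the remainder $hC_K(h,\theta_1,\theta_2)$ is continuous on $[0,A_0]\times\mathbb{S}^{N-1}\times\mathbb{S}^{N-1}$ and vanishes unless $h\geq d$, so the corresponding contribution is dominated by $\int_d^{\beta_1}(\beta_1^2-h^2)^{\alpha-\varepsilon}h\,dh\lesssim(\beta_1^2-d^2)^{\alpha-\varepsilon+1}$, which is strictly less singular than $(\beta_1^2-d^2)^{-\varepsilon}$ when $\alpha+1>0$, i.e.\ for all $N\geq 3$. Collecting the sum of these estimates, reinstating the prefactor $t^{2(w-N/2)}(r_1r_2)^{-w}$ and taking absolute values then gives \eqref{equ:t21kernesta}, with the cut-off $\chi\{d_{\mathbb{S}^{N-1}}(\theta_1,\theta_2)<\beta_1\}$ arising automatically from the support of the $(\,\cdot\,)_+$ powers.
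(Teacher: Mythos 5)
Your proposal is correct in overall structure and follows the same strategy as the paper: expand $\cos(hL)$ via the Hadamard parametrix of Lemma~\ref{parametrix}, identify the $k=0$ term as the source of the singular factor $(\beta_1^2-d_{\mathbb{S}^{N-1}}^2)^{-\varepsilon}$, and observe that the $k\ge1$ terms and the continuous remainder are strictly less singular, then reassemble with the prefactor $t^{2(w-N/2)}(r_1r_2)^{-w}$. Where you diverge from the paper is in the evaluation of the distributional pairing for $k\le N/2-1$. The paper treats the even case $\tfrac{N}{2}\in\N$ (iterated $-\tfrac{1}{2h}\tfrac{d}{dh}$ acting on $(\cos h-\cos\beta_1)^{\alpha-w}$, evaluated at $h=d$) and the odd case $N=2j+1$ (the splitting $G_0=G_{0,1}+G_{0,2}$ plus a Riemann--Liouville integration-by-parts argument) separately and by hand, carrying the exact factor $\cos h-\cos\beta_1$ throughout; you propose a unified Beta-integral identity valid for all $N$ via analytic continuation of $B(k-\tfrac{N}{2}+1,\alpha-w+1)/\Gamma(k-\tfrac{N}{2}+1)=\Gamma(\alpha-w+1)/\Gamma(k-w+1)$, which cleanly explains why the normalization $1/\Gamma(k-\tfrac{N}{2}+1)$ removes the poles in the even case and also why the constant grows only polynomially in $\mathrm{Im}\,w$. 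This is a genuinely more uniform route and it buys a closed form, which the paper does not give.

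The one step you leave slightly loose is the reduction $\cos h-\cos\beta_1\approx\tfrac12(\beta_1^2-h^2)$. Writing $\cos h-\cos\beta_1=\tfrac12(\beta_1^2-h^2)\,\phi(h,\beta_1)$ with $\phi$ smooth, positive and bounded away from $0$ on $[0,A_0]^2$, your assertion that ``the Beta structure of the integral is preserved'' under each application of $\tfrac{1}{2h}\partial_h$ is not literally true: differentiating $(\cos h-\cos\beta_1)^{\alpha-w}$ produces both the expected power drop and extra terms where the derivative falls on $\phi^{\alpha-w}$, and these must be checked to be bounded and no more singular; the same care is needed when passing through the Riemann--Liouville representation in the odd case. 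This is exactly the bookkeeping the paper's explicit case-split performs (e.g.\ the bound $\big|(-\tfrac{1}{2h}\tfrac{d}{dh})^{j-1}(\cos h-\cos\beta_1)^{j-\frac12-w}\big|\lesssim(\cos h-\cos\beta_1)^{\frac12-\varepsilon}$ and the estimates of $G_{0,1}$, $G_{0,2}$). Your argument is correct in substance, but a complete proof should make the treatment of $\phi^{\alpha-w}$ and its derivatives explicit rather than appealing to ``preservation of Beta structure.''
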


\begin{proof}
By the definition of $I_{2}\big((r_1,\theta_1),(r_2,\theta_2)\big)$ given by \eqref{proof:I2}, we only need to consider
$$G=\int_0^{\beta_1}\big(\cos h-\cos \beta_1\big)^{\alpha-w}\cos(hL)(\theta_1,\theta_2)\;dh,\quad \beta_1<A_0.$$
According to the parametrix of the wave operator on $\mathbb{S}^{N-1}$ and \eqref{equ:cosslmm}, it is sufficient to estimate the following two terms
\begin{align*}
&G_{e}= \int_0^{\beta_1}\big(\cos h-\cos \beta_1\big)^{\alpha-w}hC_{N+1}(h,\theta_1,\theta_2)\;dh,\\
&G_k=\int_0^{\beta_1}\big(\cos h-\cos \beta_1\big)^{\alpha-w}h\frac{(h^2-d_{\mathbb{S}^{N-1}}(\theta_1,\theta_2)^2)_+^{k-\frac{N}{2}}}{\Gamma(k-\frac{N}2+1)}
\;dh,\quad0\leq k\leq N+1.
\end{align*}
We first focus on demonstrating the simpler term $G_e$, and the error term $G_e$ can be obtained directly
\begin{align*}
\big| G_{e}\big|\leq& \chi\big\{\theta_1,\theta_2\in \mathbb{S}^{N-1};\;d_{\mathbb{S}^{N-1}}(\theta_1,\theta_2)<\beta_1\big\}\int_{d_{\mathbb{S}^{N-1}}(\theta_1,\theta_2)}^{\beta_1}\big(\cos h-\cos \beta_1\big)^{\alpha-\varepsilon}h\;dh\\
\leq&C\chi\big\{\theta_1,\theta_2\in \mathbb{S}^{N-1};\;d_{\mathbb{S}^{N-1}}(\theta_1,\theta_2)<\beta_1\big\},
\end{align*}
in which we utilized the condition $N\geq3,\tfrac12<\varepsilon<1$ and $\beta_1<1$.

Next, with a view to computing the term $G_k(0\leq k\leq N+1)$, we will divide the proof into two cases $\tfrac{N}2\in\N$ and $\tfrac{N}2\not\in\N$ based on Lemma \ref{parametrix}.

{\bf Case 1: $\tfrac{N}2\in\N.$} For $\beta_1<A_0<1$, and due to the fact the even function $\tfrac{\sin h}{h}$ is analytic for $-1<h<1$, we can derive
\begin{align*}
&|G_0|\\
= & \Big|\chi\big\{\theta_1,\theta_2\in \mathbb{S}^{N-1};\;d_{\mathbb{S}^{N-1}}(\theta_1,\theta_2)<\beta_1\big\}\frac12\Big(-\frac{1}{2h}\frac{d}{dh}\Big)^{\frac{N}{2}-1}
\big(\cos h-\cos \beta_1\big)^{\frac{N}{2}-1-w}\big|_{h=d_{\mathbb{S}^{N-1}}(\theta_1,\theta_2)}\Big|\\
\lesssim& C(N)\chi\big\{\theta_1,\theta_2\in \mathbb{S}^{N-1};\;d_{\mathbb{S}^{N-1}}(\theta_1,\theta_2)<\beta_1\big\}\left(\cos d_{\mathbb{S}^{N-1}}(\theta_1,\theta_2)-\cos\beta_1\right)^{-\varepsilon}\\
\lesssim&C(N)\chi\big\{\theta_1,\theta_2\in \mathbb{S}^{N-1};\;d_{\mathbb{S}^{N-1}}(\theta_1,\theta_2)<\beta_1\big\}\left({\beta_1}^2-d_{\mathbb{S}^{N-1}}(\theta_1,\theta_2)^2\right)^{-\varepsilon}.
\end{align*}
Furthermore, by a analogous argument, we can gain
$$|G_k|\leq C\chi\left\{\theta_1,\theta_2\in \mathbb{S}^{N-1};\;d_{\mathbb{S}^{N-1}}(\theta_1,\theta_2)<\beta_1\right\},\quad\forall\;1\leq k\leq N+1,$$
which implies the estimate in \eqref{equ:t21kernesta} for $\tfrac{N}2\in\N.$

{\bf Case 2: $N=2j+1, j\in\N.$} For verifying the term $|G|$, the main difficulty lies in estimating $|G_0|$, since the function $f_w(h)=(\cos h -\cos\beta_1)^{\alpha-w}$ belongs to $C^{j-1}$, but not $C^j$. However, we first observe that
\begin{align*}
G_0= & \chi\big\{\theta_1,\theta_2\in \mathbb{S}^{N-1};\;d_{\mathbb{S}^{N-1}}(\theta_1,\theta_2)<\beta_1\big\}\\
&\times\int_{d_{\mathbb{S}^{N-1}}(\theta_1,\theta_2)}^{\beta_1}\Big[\Big(-\frac{1}{2h}\frac{d}{dh}\Big)^{j-1}
(\cos h-\cos \beta_1)^{j-\frac12-w}\Big]h\frac{(h^2-d_{\mathbb{S}^{N-1}}(\theta_1,\theta_2)^2)_+^{-\frac32}}{\Gamma(-\frac12)}\;dh.
\end{align*}
Then, we define the terms $G_{0,1}$ and $G_{0,2}$ respectively by
\begin{align*}
G_{0,1}= \int_{\frac{\beta_1+d_{\mathbb{S}^{N-1}}(\theta_1,\theta_2)}{2}}^{\beta_1}\Big[\Big(-\frac{1}{2h}\frac{d}{dh}\Big)^{j-1}
(\cos h-\cos \beta_1)^{j-\frac12-w}\Big]h\frac{(h^2-d_{\mathbb{S}^{N-1}}(\theta_1,\theta_2)^2)_+^{-\frac32}}{\Gamma(-\frac12)}\;dh,
\end{align*}
and
\begin{align*}
G_{0,2}= \int^{\frac{\beta_1+d_{\mathbb{S}^{N-1}}(\theta_1,\theta_2)}{2}}_{d_{\mathbb{S}^{N-1}}(\theta_1,\theta_2)}\Big[\Big(-\frac{1}{2h}\frac{d}{dh}\Big)^{j-1}
(\cos h-\cos \beta_1)^{j-\frac12-w}\Big]h\frac{(h^2-d_{\mathbb{S}^{N-1}}(\theta_1,\theta_2)^2)_+^{-\frac32}}{\Gamma(-\frac12)}\;dh.
\end{align*}
Therefore, we can acquire
\begin{equation}
G_0=\chi\big\{\theta_1,\theta_2\in \mathbb{S}^{N-1};\;d_{\mathbb{S}^{N-1}}(\theta_1,\theta_2)<\beta_1\big\}\big(G_{0,1}+G_{0,2}\big).
\end{equation}
Next, we turn our attentions to proving the terms $G_{0,1}$ and $G_{0,2}$.

{\bf Estimate of $|G_{0,1}|$:} This term is relatively easy, and we note that there exists a constant $C>0$ such that
$$\Big|\Big(-\frac{1}{2h}\frac{d}{dh}\Big)^{j-1}(\cos h-\cos \beta_1)^{j-\frac12-w}\Big|\leq C\big(\cos h-\cos \beta_1\big)^{\frac12-\varepsilon}$$
by the fact that $\frac{\beta_1+d_{\mathbb{S}^{N-1}}(\theta_1,\theta_2)}{2}<h<\beta_1<1$ and the even function $\tfrac{\sin h}{h}$ is analytic for $-1<h<1$.
Thus, we obtain
\begin{align*}
  |G_{0,1}|\leq & C\int_{\frac{\beta_1+d_{\mathbb{S}^{N-1}}(\theta_1,\theta_2)}{2}}^{\beta_1}\big(\cos h-\cos \beta_1\big)^{\frac12-\varepsilon}h\big(h^2-d_{\mathbb{S}^{N-1}}(\theta_1,\theta_2)^2\big)^{-\frac32}\;dh\\
  \leq&C_\ast {\beta_1}^{-\varepsilon}\left(\beta_1-d_{\mathbb{S}^{N-1}}(\theta_1,\theta_2)\right)^{-\frac32}\int_{\frac{\beta_1+d_{\mathbb{S}^{N-1}}(\theta_1,\theta_2)}{2}}^{\beta_1}
  (\beta_1-h)^{\frac12-\varepsilon}\;dh\\
  \leq&C_\ast\big({\beta_1}^2-d_{\mathbb{S}^{N-1}}(\theta_1,\theta_2)^2\big)^{-\varepsilon},
\end{align*}
where we used the property $\cos h-\cos \beta_1=2\sin\frac{\beta_1-h}{2}\sin\frac{\beta_1+h}{2}$.

{\bf Estimate of $|G_{0,2}|$:} In order to estimate $|G_{0,2}|$, we begin by recalling the general form of the Riemann-Liouville integral of order $\alpha$ given by
\begin{equation}
I^{\alpha}f(h)=\frac{1}{\Gamma(\alpha)}\int_{0}^{h}(h-u)^{\alpha-1}f(u)du,
\end{equation}
see \cite[p.11]{Rie}, with a minor adjustment in sign.
On the other hand, we write
$$g(s)=\big(-\tfrac{d}{ds}\big)^{j-1}\big(\cos\sqrt{s}-\cos \beta_1\big)^{j-\frac12-w}.$$
Therefore, performing a change of variable $s= h^2$ and applying the Riemann-Liouville integral, this results can be exhibited in the following expression
\begin{align*}
&|G_{0,2}| \\
= & \frac{1}{2\Gamma\left(-\frac{1}{2}\right)} \int_{d_{\mathbb{S}^{N-1}}^2(\theta_1, \theta_2)}^{\left(\frac{\beta_1 + d_{\mathbb{S}^{N-1}}(\theta_1, \theta_2)}{2}\right)^2}
\left[ \left( -\frac{d}{ds} \right)^{j-1} \left( \cos\sqrt{s} - \cos \beta_1 \right)^{j - \frac{1}{2} - w} \right] \left( s - d_{\mathbb{S}^{N-1}}^2(\theta_1, \theta_2) \right)^{-\frac{3}{2}} \, ds \\
= & \frac{1}{2\Gamma\left(\frac{1}{2}\right)} g \left( \left( \frac{\beta_1 + d_{\mathbb{S}^{N-1}}(\theta_1, \theta_2)}{2} \right)^2 \right)
\left[ \left( \frac{\beta_1 + d_{\mathbb{S}^{N-1}}(\theta_1, \theta_2)}{2} \right)^2 - d_{\mathbb{S}^{N-1}}^2(\theta_1, \theta_2) \right]^{-\frac{1}{2}} \\
& - \frac{1}{2\Gamma\left(\frac{1}{2}\right)} \int_{d_{\mathbb{S}^{N-1}}^2(\theta_1, \theta_2)}^{\left( \frac{\beta_1 + d_{\mathbb{S}^{N-1}}(\theta_1, \theta_2)}{2} \right)^2}
g'(s) \left( s - d_{\mathbb{S}^{N-1}}^2(\theta_1, \theta_2) \right)^{-\frac{1}{2}} \, ds \\
\leq & C_1 \left( \beta_1^2 - d_{\mathbb{S}^{N-1}}^2(\theta_1, \theta_2) \right)^{-\varepsilon} + C_2 \int_{d_{\mathbb{S}^{N-1}}^2(\theta_1, \theta_2)}^{\left( \frac{\beta_1 + d_{\mathbb{S}^{N-1}}(\theta_1, \theta_2)}{2} \right)^2}
\left( \beta_1^2 - s \right)^{-\frac{1}{2} - \varepsilon} \left( s - d_{\mathbb{S}^{N-1}}^2(\theta_1, \theta_2) \right)^{-\frac{1}{2}} \, ds \\
\leq & C \left( \beta_1^2 - d_{\mathbb{S}^{N-1}}^2(\theta_1, \theta_2) \right)^{-\varepsilon},
\end{align*}
provided that $d_{\mathbb{S}^{N-1}}(\theta_1,\theta_2)<\beta_1<1$ and ${\rm Re}w=\varepsilon\in\big(\tfrac12,1\big)$.

Hence, for $\tfrac{N}2\not\in\N$, we can achieve
$$|G_0|\leq C\chi\big\{\theta_1,\theta_2\in \mathbb{S}^{N-1};\;d_{\mathbb{S}^{N-1}}(\theta_1,\theta_2)<\beta_1\big\}\left({\beta_1}^2-d_{\mathbb{S}^{N-1}}(\theta_1,\theta_2)^2\right)^{-\varepsilon}.$$
Next, for $1\leq k\leq N+1$, we note
\begin{align*}
G_k= & \chi\big\{\theta_1,\theta_2\in \mathbb{S}^{N-1};\;d_{\mathbb{S}^{N-1}}(\theta_1,\theta_2)<\beta_1\big\}\\
&\times \int_{d_{\mathbb{S}^{N-1}}(\theta_1,\theta_2)}^{\beta_1}\Big[\Big(-\frac{1}{2h}\frac{d}{dh}\Big)^{j-1}
(\cos h-\cos\beta_1)^{j-\frac12-w}\Big]h\frac{\left(h^2-d_{\mathbb{S}^{N-1}}(\theta_1,\theta_2)^2\right)_+^{k-\frac32}}{\Gamma(k-\frac12)}\;dh.
\end{align*}
Then, similar to the method of $G_0$, it is easy to prove that
$$|G_k|\leq C\chi\big\{\theta_1,\theta_2\in\mathbb{S}^{N-1};\;d_{\mathbb{S}^{N-1}}(\theta_1,\theta_2)<\beta_1\big\}.$$
Therefore, we conclude the proof of Proposition \ref{lem:I2}.
\end{proof}

\begin{proof}[{\bf Proof of Lemma \ref{prop:I2}:}]
Now, we pay attentions to showing Proposition \ref{prop:I2} based on proposition \ref{lem:I2} established above. According to Riesz convex theorem, it suffices to confirm that there exists a constant $C(\varepsilon)>0$ such that
\begin{align*}
&\sup_{(r_1,\theta_1)\in\R^N}\int_{\R^N}\big|I_{2}\big((r_1,\theta_1),(r_2,\theta_2)\big)\big|\;dy
+\sup_{(r_2,\theta_2)\in \R^N}\int_{\R^N}\big|I_{2}\big((r_1,\theta_1),(r_2,\theta_2)\big)\big|\;dx\leq C(\varepsilon).
\end{align*}
For all $x=(r_1,\theta_1)\in \R^N$, from Lemma \ref{lem:I2}, we first denote
\begin{align}
\mathcal{H}
\triangleq
\int_{\cos \beta_1=\frac{r_1^2+r_2^2-t^2}{2r_1r_2}>\cos A_0}\int_{d_{\mathbb{S}^{N-1}}(\theta_1,\theta_2)<\beta_1}\frac{t^{2(\varepsilon-\frac{N}{2})}}{(r_1r_2)^{\varepsilon}}\left({\beta_1}^2-d_{\mathbb{S}^{N-1}}^2(\theta_1,\theta_2)\right)^{-\varepsilon}r_2^{N-1}
\;dr_2d\theta_2.
\end{align}
Note that
\begin{align*}
&\int_{d_{\mathbb{S}^{N-1}}(\theta_1,\theta_2)<\beta_1}({\beta_1}^2-d_{\mathbb{S}^{N-1}}^2(\theta_1,\theta_2))^{-\varepsilon}\;d\theta_2\\
&\leq C\int_0^{\beta_1}({\beta_1}^2-h^2)^{-\varepsilon}h^{N-2}\;dh=C(N,\varepsilon){\beta_1}^{N-1-2\varepsilon}.
\end{align*}
Hence, this yields
\begin{align*}
  \mathcal{H}\leq & C(N,\varepsilon)t^{2(\varepsilon-\frac{N}{2})}\int_{\cos \beta_1=\frac{r_1^2+r_2^2-t^2}{2r_1r_2}>\cos A_0}\left(r_1r_2\right)^{-\varepsilon}\left(\frac{t^2-(r_1-r_2)^2}{2r_1r_2}\right)^{\frac{N-1-2\varepsilon}{2}}
  r_2^{N-1}\;dr_2\\
   \leq&C(N,\varepsilon)t^{2(\varepsilon-\frac{N}{2})}\int_{\{r_1,r_2>0;
  |r_1-r_2|<t<r_1+r_2\}}\left(\tfrac{t^2-(r_1-r_2)^2}{2r_1r_2}\right)^{\frac{N-1}{2}}
  \left[t^2-(r_1-r_2)^2\right]^{-\varepsilon}r_2^{N-1}\;dr_2,
\end{align*}
provided that ${\beta_1}^2\sim\frac{t^2-(r_1-r_2)^2}{2r_1r_2}$ by the definition of $\beta_1$.

With the aim of estimating the term $\mathcal{H}$, we then divide into two cases: $r_1\geq t$ and $r_1<t$.

{\bf Case 1: $r_1\geq t.$} For $N\geq3$ and $\varepsilon\in\big(\tfrac12,1\big)$, one has
\begin{align*}
 \mathcal{H}\lesssim & C(\varepsilon)t^{2(\varepsilon-\frac{N}{2})}\int_{\cos \beta_1=\frac{r_1^2+r_2^2-t^2}{2r_1r_2}>\cos A_0}\left[t^2-(r_1-r_2)^2\right]^{\frac{N-1}{2}-\varepsilon}
 \left(\frac{r_2}{r_1}\right)^{\frac{N-1}{2}}\;dr_2\\
 \lesssim&C(\varepsilon)t^{2(\varepsilon-\frac{N}{2})}
 (t^2)^{\frac{N-1}{2}-\varepsilon}r_1\int_{\{|\frac{r_2}{r_1}-1|<\frac{t}{r_1}<
 \frac{r_2}{r_1}+1\}}\left(\frac{r_2}{r_1}\right)^{\frac{N-1}{2}}d\left(\frac{r_2}{r_1}\right)\\
 =&C(\varepsilon)\left(\frac{t}{r_1}\right)^{-1}\int_{1-\frac{t}{r_1}}^{1+\frac{t}{r_1}}
 h^{\frac{N-1}{2}}\;dh\\
 \lesssim&C(\varepsilon).
\end{align*}

{\bf Case 2: $r_1<t.$} In this case, we have $|r_1-r_2|<t<r_1+r_2$, which implies that $r_2<2t$. This, together with $N\geq3$ and $\frac{r_1^2+r_2^2-t^2}{2r_1r_2}\in(0,1)$, leads to the following estimate
\begin{align*}
   &t^{2(\varepsilon-\frac{N}{2})}\left(\frac{t^2-(r_1-r_2)^2}{2r_1r_2}\right)^{\frac{N-1}{2}}
  \left[t^2-(r_1-r_2)^2\right]^{-\varepsilon}r_2^{N-1}\\
  \lesssim&t^{2(\varepsilon-\frac{N}{2})}r_2^{N-1}\frac{t^2-(r_1-r_2)^2}{2r_1r_2}
  \left[\left(t+|r_1-r_2|\right)\cdot\left(t-|r_1-r_2|\right)\right]^{-\varepsilon}\\
  \lesssim&t^{2(\varepsilon-\frac{N}{2})}t^{(N-1)-1}\frac{1}{r_1}t^{1-\varepsilon}
  \left(t-|r_1-r_2|\right)^{1-\varepsilon}\\
  =&\left(\frac{t}{r_1}\right)^{\varepsilon-1}\frac{1}{r_1}\left(\frac{t-|r_1-r_2|}{r_1}\right)^{1-\varepsilon}\\
  \lesssim&\frac{1}{r_1}\left(\tfrac{t-|r_1-r_2|}{r_1}\right)^{1-\varepsilon}.
\end{align*}
Next, we take a variable substitution $h=\tfrac{r_2}{r_1}$, combining this with the condition $\tfrac{t}{r_1}>1$ and the fact $\varepsilon-1<0$ yields
\begin{align*}
  \mathcal{H}\lesssim &C(\varepsilon)\int_{\{
  |r_1-r_2|<t<r_1+r_2\}}\frac{1}{r_1}\left(\tfrac{t-|r_1-r_2|}{r_1}\right)^{1-\varepsilon}\;dr_2\\
  \lesssim& C(\varepsilon)\int_{1-\frac{t}{r_1}}^{1+\frac{t}{r_1}}
  \left(\tfrac{t}{r_1}-|h-1|\right)^{1-\varepsilon}\;dh\\
  \lesssim&C(\varepsilon)
\end{align*}
provided that $\frac{t}{r_1}-|h-1|\leq2$ for all $h\in\big[\frac{t}{r_1}-1,\frac{t}{r_1}+1\big].$
Therefore, we complete the proof of Lemma \ref{prop:I2}.

\end{proof}

\subsection{Proof of Lemma \ref{prop:I3}} At the end of this section, we will focus on the proof of Lemma \ref{prop:I3} following the method established in \cite{L}. For the operator $\LL_{\A}$, although we guess that the heat kernel estimate on the sphere is valid in the presence of the smooth magnetic field, it is challenging to provide an exact and detailed proof. Fortunately, the more general spectral multiplier theorem established by Sogge \cite[Theorem 5.3.1]{sogge}, offers a powerful framework that enables us to proceed with the analysis in a more tractable manner.

Let $w,\beta_1$ be as above, we define the following three functions for $s\geq\alpha:$
\begin{align}\label{equ:mwasdef}
M(w,\beta_1;s):= P_{s-\frac12}^{w-\frac{N-1}{2}}(\cos \beta_1)\sin^{\frac{N-1}{2}-w}\beta_1,
\end{align}
\begin{equation}\label{equ:uwadef}
U(w,\beta_1;s):=\frac{i2^{w-\frac{N-1}{2}}}{\sqrt{\pi}\Gamma(\frac{N}{2}-w)}
e^{-i\beta_1(w-\alpha)}s^{-\alpha}e^{-i\beta_1s},
\end{equation}
\begin{equation}\label{equ:vwadef}
V(w,\beta_1;s):=s^\alpha\int_0^1t^{w+s-\frac{N}{2}}(1-t)^{\alpha-w}(1-te^{-2i\beta_1})^{\alpha-w}\;dt.
\end{equation}
From \eqref{kernel} and definition of Legendre function $P_{a}^{b}(\cos\beta_1)$ (see \cite{GR}, \cite[Section 2]{L} and the references therein), we then obtain the relationship
\begin{equation}\label{equ:mwanaotf}
M(w,\beta_1;s)=U(w,\beta_1;s)V(w,\beta_1;s)-U(w,-\beta_1;s)V(w,-\beta_1;s).
\end{equation}
Before proceeding with the proof of Lemma \ref{prop:I3}, we first review the following lemma.
\begin{lemma}\label{lem:V}
Let $A_0\leq \beta_1<\pi$ and $w=\varepsilon+iy\big(\tfrac{N+1}2-\varepsilon\big)$ with $y\in\R$ and $\varepsilon\in\big(\tfrac12,1\big).$
Set
\begin{equation}
\Lambda\triangleq\sum_{j=0}^{[\frac{N-1}{2}]+1}\sup_{s\geq\alpha}\left|s^jV^{(j)}(w,\pm\beta_1;s)\right|
\end{equation}
Then, there exists a constant $C_\varepsilon=C(n,A_0,\varepsilon)>0$, which is independent of $\beta_1$ and $y$, such that
\begin{equation}\label{equ:lambdeest}
\Lambda\leq C_\varepsilon e^{N\pi|y|}\times \begin{cases}
(\pi-\beta_1)^{\frac12-\varepsilon},&\text{if}\quad N=3\\
1,&\text{if}\quad N\geq4.
\end{cases}
\end{equation}
\end{lemma}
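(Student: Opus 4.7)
The plan is to differentiate $V(w,\pm\beta_1;s)=s^\alpha I(s)$, where $I(s)=\int_0^1 t^{w+s-N/2}(1-t)^{\alpha-w}(1-te^{\mp 2i\beta_1})^{\alpha-w}\,dt$, using Leibniz's rule; since only $t^{w+s-N/2}$ depends on $s$, each $s$-derivative simply produces a factor $\log t$ inside the integral. Thus $s^j V^{(j)}(w,\pm\beta_1;s)$ becomes a finite linear combination of terms of the form $s^{\alpha+m}\int_0^1 t^{w+s-N/2}(-\log t)^m(1-t)^{\alpha-w}(1-te^{\mp 2i\beta_1})^{\alpha-w}\,dt$ with $0\leq m\leq j\leq[\tfrac{N-1}{2}]+1$, and the whole problem reduces to a uniform estimate of such integrals for $s\geq\alpha$.

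The next step is to take absolute values inside and extract the $e^{N\pi|y|}$ factor. The identities $|t^{w+s-N/2}|=t^{s+\varepsilon-N/2}$ and $|(1-t)^{\alpha-w}|=(1-t)^{\alpha-\varepsilon}$ contribute no exponential growth in $y$, since $\log t$ and $\log(1-t)$ are real on $(0,1)$. For the remaining factor, $|te^{\mp 2i\beta_1}|=t<1$ forces $1-te^{\mp 2i\beta_1}$ to have strictly positive real part, so its principal argument lies in $(-\pi/2,\pi/2)$; consequently $|(1-te^{\mp 2i\beta_1})^{\alpha-w}|\leq|1-te^{\mp 2i\beta_1}|^{\alpha-\varepsilon}\,e^{(\pi/2)|y|(\frac{N+1}{2}-\varepsilon)}\leq|1-te^{\mp 2i\beta_1}|^{\alpha-\varepsilon}\,e^{N\pi|y|}$. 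This pulls out the claimed exponential, after which everything left to estimate is $y$-independent.

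For $N\geq 4$, the exponent $\alpha-\varepsilon=(N-2)/2-\varepsilon$ is strictly positive, so $|1-te^{\mp 2i\beta_1}|^{\alpha-\varepsilon}\leq 2^{\alpha-\varepsilon}$ uniformly. The remaining integral is a classical Beta-type integral and satisfies $\int_0^1 t^{s+\varepsilon-N/2}(-\log t)^m(1-t)^{\alpha-\varepsilon}\,dt\lesssim (s+\varepsilon-N/2+1)^{-(m+\alpha-\varepsilon+1)}$ (with a constant depending on $m$ through a Gamma function). Together with the prefactor $s^{\alpha+m}$ this yields a uniform $O(s^{\varepsilon-1})$ bound, and $\varepsilon<1$ makes it bounded for $s\geq\alpha$, delivering the $N\geq 4$ half of \eqref{equ:lambdeest}.

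The main obstacle is the case $N=3$, where $\alpha=1/2$ and $\alpha-\varepsilon<0$, so that $|1-te^{\mp 2i\beta_1}|^{\alpha-\varepsilon}$ becomes singular as $\beta_1\to\pi$ and $t\to 1$. The elementary identity $|1-te^{\mp 2i\beta_1}|^2=(1-t)^2+4t\sin^2\beta_1$ supplies the lower bound $|1-te^{\mp 2i\beta_1}|\geq 2\sqrt{t}\,|\sin\beta_1|$, and the inequality $\sin\beta_1\geq(2/\pi)(\pi-\beta_1)$ valid on $[\pi/2,\pi)$ then produces $|1-te^{\mp 2i\beta_1}|^{1/2-\varepsilon}\leq C(\pi-\beta_1)^{1/2-\varepsilon}$ on the region $t\geq 1/2$; the contribution from $t\leq 1/2$ is exponentially small in $s$ through the weight $t^{s+\varepsilon-3/2}$ and is easily absorbed. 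The leftover Beta-type integral, still convergent because $\alpha-\varepsilon\in(-1/2,0)\subset(-1,\infty)$, gives the same $O(s^{\varepsilon-1})$ bound as before. Finally, on $\beta_1\in[A_0,\pi/2]$ the quantity $\sin\beta_1\geq\sin A_0>0$ is bounded below and the target factor $(\pi-\beta_1)^{1/2-\varepsilon}$ is also bounded below, so this range is absorbed into the constant $C_\varepsilon=C(N,A_0,\varepsilon)$.
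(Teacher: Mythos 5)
Your proof is correct. The paper itself gives no argument for this lemma, deferring entirely to \cite[Lemma 5.8]{L}; your direct estimation --- Leibniz differentiation producing $(\log t)^m$ factors, extraction of the $e^{N\pi|y|}$ factor from the bounded argument of $1-te^{\mp 2i\beta_1}$ (whose real part is at least $1-t>0$), Beta-function asymptotics to convert the $t$-integral's decay in $s$ into a uniform $O(s^{\varepsilon-1})$ bound against the prefactor $s^{\alpha+m}$, and the lower bound $|1-te^{\mp 2i\beta_1}|\geq 2\sqrt{t}\,|\sin\beta_1|$ to isolate the $(\pi-\beta_1)^{\frac12-\varepsilon}$ singularity when $N=3$ --- is exactly the expected route and correctly fills in the omitted proof.
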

\begin{proof}
The proof of this lemma is similar to that of \cite[Lemma 5.8]{L}, we omit the proof here, and refer the reader to \cite[Lemma 5.8]{L} for a more detailed procedure.
\end{proof}
\begin{lemma}[Spectral multiplier theorem \cite{sogge}]\label{spectralmulti}
Suppose $M$ be a $C^{\infty}$ compact manifold of dimension $n\geq2$ and $P=P(x,D)\in \Psi_{cl}^{1}(M)$ is self-adjoint and poositive. Fix $\beta\in C_{0}^{\infty}((\frac{1}{2},2))$ satisfying $\sum_{-\infty}^{\infty}\beta(2^j\tau)=1, \tau>0$ and let $m\in L^{\infty}(\R)$ satisfy
\begin{equation}
\sup_{\lambda>0}\lambda^{-1}\int_{-\infty}^{+\infty}\left|\lambda^{\alpha}D_{\tau}^{\alpha}(\beta(\tau/\lambda)m(\tau))\right|^2\;d\tau<\infty, \quad 0\leq\alpha\leq\kappa,
\end{equation}
where $\kappa$ is an integer $\kappa>\frac{n}{2}$. Then, we obtain
\begin{equation}
\|m(P)\|_{L^p(M)}\leq C_{p}\|f\|_{L^p(M)}, \quad 1<p<\infty.
\end{equation}
\end{lemma}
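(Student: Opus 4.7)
The plan is to follow the standard Calder\'on--Zygmund strategy for H\"ormander-type spectral multiplier theorems, as carried out in Sogge's monograph. The $L^p$ bound for $1 < p < \infty$ will be obtained by combining: (i) the trivial $L^2 \to L^2$ boundedness (since the hypothesis at $\alpha = 0$ implies $\|m\|_\infty < \infty$, and the spectral theorem gives $\|m(P)\|_{L^2 \to L^2} \leq \|m\|_\infty$); (ii) a weak-type $(1,1)$ estimate for $m(P)$; and (iii) Marcinkiewicz interpolation plus duality to cover $2 < p < \infty$. Since $M$ is a compact Riemannian manifold of dimension $n$, geodesic balls satisfy volume doubling, so $M$ is a space of homogeneous type and the Calder\'on--Zygmund machinery applies. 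Thus the central task is to verify a H\"ormander integral condition on the Schwartz kernel of $m(P)$.

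I would first perform a Littlewood--Paley decomposition of the symbol by setting $m_j(\tau) = \beta(\tau/2^j) m(\tau)$ for $j \geq 0$ (plus a harmless low-frequency piece). The hypothesis rescales into the family of Sobolev bounds $\|D^\alpha m_j\|_{L^2(\R)}^2 \leq C\, 2^j\, 2^{-2 j \alpha}$ for $0 \leq \alpha \leq \kappa$, with $\kappa > n/2$. By Fourier inversion, the Schwartz kernel $K_j(x,y)$ of $m_j(P)$ admits the representation
\[
K_j(x,y) = \frac{1}{2\pi} \int_\R \widehat{m_j}(t)\, e^{itP}(x,y)\, dt,
\]
which couples the analysis to the half-wave propagator $e^{itP}$. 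The crucial geometric input is finite propagation speed: since $P \in \Psi^1_{cl}(M)$ is self-adjoint and positive with real principal symbol, standard FIO theory (via the Hamiltonian flow of the principal symbol) gives a constant $c > 0$, uniform by compactness of $M$, such that $e^{itP}(x,y) = 0$ whenever $d_M(x,y) > c|t|$. Consequently the $t$-integral is effectively confined to $|t| \geq d_M(x,y)/c$.

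The core estimate is an off-diagonal kernel bound. For $y_0 \in M$, $R > 0$, and $x$ with $d_M(x,y_0) > 2R$, finite propagation speed confines the $t$-integration to $|t| > R/c$. Applying Cauchy--Schwarz in $t$ with the splitting $\widehat{m_j}(t) = |t|^{-\alpha} \cdot |t|^{\alpha}\widehat{m_j}(t)$, then squaring, integrating in $x$ over the exterior ball, and using the pointwise $L^2$-control $\int_M |e^{itP}(x,y_0)|^2\, dV(x) \lesssim 1$ (in the dyadic-projected sense) together with Plancherel in $t$, one obtains a bound of the form
\[
\int_{d_M(x,y_0) > 2R} |K_j(x,y_0)|\, dV(x) \lesssim R^{n/2 - \alpha}\, 2^{j(1/2 - \alpha)}\, \mathrm{vol}(B(y_0,2R))^{1/2},
\]
after recognizing $\||t|^\alpha \widehat{m_j}\|_{L^2} \asymp \|D^\alpha m_j\|_{L^2}$. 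Choosing $\alpha$ slightly above $n/2$ (possible since $\kappa > n/2$) yields summable decay in $j$ in the regime $R \gtrsim 2^{-j}$, whereas for smaller $R$ one controls $K_j$ directly via the $L^2$ bound and ball-volume estimates. Summing the dyadic pieces yields the H\"ormander condition, hence the weak-$(1,1)$ bound.

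The main obstacle is the precise balancing of finite propagation speed, the Sobolev regularity $\kappa > n/2$ of the symbol, and the manifold geometry: one must carefully match the cutoff radius $R$ against the frequency scale $2^j$, separating a near-diagonal regime (handled by the $L^2$ bound and volume estimates) from the far regime (handled by the off-diagonal estimate above). A second subtle point is that finite propagation speed for a general $P \in \Psi^1_{cl}(M)$, rather than for $\sqrt{\Delta}$, relies on standard but nontrivial FIO / Hamiltonian-flow input. Apart from these technical inputs, the remaining ingredients --- the Calder\'on--Zygmund decomposition on a space of homogeneous type, Marcinkiewicz interpolation, and duality --- are entirely standard, and the full argument appears as Theorem 5.3.1 of Sogge's monograph.
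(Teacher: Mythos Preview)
The paper does not supply its own proof of this lemma: it is quoted verbatim as a known result from Sogge's monograph (Theorem~5.3.1 in \cite{sogge}) and is used as a black box in the argument for Lemma~\ref{prop:I3}. Your proposal correctly recognizes this and sketches the Calder\'on--Zygmund / finite-propagation-speed route that underlies Sogge's argument, so there is nothing in the paper to compare against beyond the citation you already give at the end of your sketch.
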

On the one hand, we set
$$V_\ast(w,\beta_1;h)=V(w,\beta_1;\sqrt{\alpha^2+h^2}),\quad h\geq0,$$
hence, we have
$$V(w,\beta_1;L)=V_\ast(w,\beta_1;\sqrt{\LL_{\A}}).$$
From Lemma \ref{lem:V} and Lemma \ref{spectralmulti}, we observe
that there exist two constants $C'>C>0$ such that for all $1<p<+\infty,$
\begin{align}\label{equ:V}
\|V(w,\pm \beta_1;L)\|_{p\to p}= & \big\|V_\ast(w,\pm \beta_1;\sqrt{\LL_{\A}}\big\|_{p\to p}\leq C_{p}\Lambda.
\end{align}
On the other hand, according to the result in \cite[Corollary 4.3.2]{sogge} and \cite[p.184]{sogge}, we can derive that for all $1<p<+\infty$, there exists a constant $C_p>0$ such that for all $0<\beta_1<\pi$, ${\rm Re}w\in\big(\tfrac12,1\big)$, and  $g\in L^p(\mathbb{S}^{N-1})$, there holds
\begin{equation}\label{equ:uwaest}
  \big\|U(w,\pm \beta_1;L)g\big\|_{L^p(\mathbb{S}^{N-1})}\leq\frac{C_p}{|\Gamma(\frac{N}{2}-w)|}e^{\pi|{\rm Im}w|}\|g\|_{L^p(\mathbb{S}^{N-1})}.
\end{equation}
Therefore, by \eqref{equ:mwanaotf}-\eqref{equ:uwaest}, we can acquire
\begin{lemma}\label{prop:5.9}
Let $A_0\leq \beta_1<\pi$ and $w=\varepsilon+iy\big(\tfrac{N+1}2-\varepsilon\big)$ with $y\in\R$ and $\varepsilon\in\big(\tfrac12,1\big).$ Let $1<p<+\infty$, then there exists a constant $C_\varepsilon=C(n,A_0,\varepsilon)>0$, which is independent of $\beta_1$ and $y$, such that
\begin{equation}\label{equ:mwaleest}
\big\|M(w,\beta_1;L)\big\|_{p\to p}\leq C_p(\varepsilon)\frac{ e^{2N\pi|y|}}{|\Gamma(\frac{N}{2}-w)|}\times \begin{cases}
(\pi-\beta_1)^{\frac12-\varepsilon},&\text{if}\quad N=3\\
1,&\text{if}\quad N\geq4.
\end{cases}
\end{equation}
\end{lemma}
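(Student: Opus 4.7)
The plan is to exploit the factorization identity \eqref{equ:mwanaotf}, which at the level of the spectral variable $s$ decomposes $M(w,\beta_1;s)$ into a difference of products involving $U$ and $V$. Since $L=\sqrt{L_{\mathbf{A}}+\alpha^2}$ is self-adjoint with spectrum contained in $[\alpha,+\infty)$ (using the coercivity $\mu_1(\mathbf{A})>-\alpha^2$ recorded in the preliminaries), the Borel functional calculus turns this pointwise identity into the operator identity
\[
M(w,\beta_1;L)=U(w,\beta_1;L)\,V(w,\beta_1;L)-U(w,-\beta_1;L)\,V(w,-\beta_1;L).
\]
Therefore, by the triangle inequality and submultiplicativity of the $L^p\to L^p$ operator norm, it is enough to control, for both choices of sign, the product $\|U(w,\pm\beta_1;L)\|_{p\to p}\cdot\|V(w,\pm\beta_1;L)\|_{p\to p}$.

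For the $V$-factor I would combine the spectral multiplier theorem (Lemma~\ref{spectralmulti}) with Lemma~\ref{lem:V}. The quantity $\Lambda$ bundles the weighted derivatives of $V_\ast(w,\pm\beta_1;\cdot)$ up to order $[(N-1)/2]+1$, which is the smallest integer strictly greater than $(N-1)/2=\dim \mathbb{S}^{N-1}/2$; this is exactly the Sobolev-type input required by Lemma~\ref{spectralmulti}. Inserting \eqref{equ:lambdeest} into \eqref{equ:V} yields
\[
\|V(w,\pm\beta_1;L)\|_{p\to p}\leq C_p(\varepsilon)\,e^{N\pi|y|}\cdot\begin{cases}(\pi-\beta_1)^{\frac12-\varepsilon},& N=3,\\ 1,& N\geq4.\end{cases}
\]
For the $U$-factor I would invoke \eqref{equ:uwaest} directly, which already delivers
\[
\|U(w,\pm\beta_1;L)\|_{p\to p}\leq \frac{C_p}{|\Gamma(\tfrac{N}{2}-w)|}\,e^{\pi|\mathrm{Im}\,w|}.
\]

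Multiplying the two bounds and using $|\mathrm{Im}\,w|=|y|\bigl(\tfrac{N+1}{2}-\varepsilon\bigr)\leq \tfrac{N+1}{2}|y|$, the total exponential factor is at most
\[
e^{N\pi|y|}\cdot e^{\pi(N+1)|y|/2}=e^{\pi|y|(3N+1)/2}\leq e^{2N\pi|y|}\qquad(N\geq 3),
\]
which matches the exponential in \eqref{equ:mwaleest}; the sum over the two sign choices is absorbed into $C_p(\varepsilon)$.

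I do not anticipate a substantive obstacle: once \eqref{equ:mwanaotf} and the two auxiliary estimates are in hand, the argument is a short composition. The only point worth double-checking is that the derivative control provided by $\Lambda$ in Lemma~\ref{lem:V} really supplies the Sobolev-type seminorm demanded by Sogge's theorem on $\mathbb{S}^{N-1}$; this matches because the cutoff order $[(N-1)/2]+1$ strictly exceeds half the dimension of the sphere, so Lemma~\ref{spectralmulti} applies to $V_\ast(w,\pm\beta_1;\sqrt{L_{\mathbf{A}}})=V(w,\pm\beta_1;L)$ with the norm given by $\Lambda$ (uniformly in $\beta_1$ and $y$ up to the factors explicit in \eqref{equ:lambdeest}).
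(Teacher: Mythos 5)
Your proposal is correct and follows essentially the same route as the paper: the lemma is obtained directly from the factorization \eqref{equ:mwanaotf} together with the bounds \eqref{equ:V} (via Lemma \ref{lem:V} and the spectral multiplier theorem) for the $V$-factor and \eqref{equ:uwaest} for the $U$-factor, multiplied and summed over the two sign choices. Your explicit bookkeeping of the exponential factors, $e^{N\pi|y|}e^{\pi|\mathrm{Im}\,w|}\leq e^{2N\pi|y|}$, correctly fills in the step the paper leaves implicit.
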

Finally, for the purpose of proving Lemma \ref{prop:I3}, it suffices to verify the following Lemma by the Schur's test lemma.
\begin{lemma}\label{lem:aim}
Set
\begin{align*}
K_w(r_1,r_2)= & \chi\Big\{-1<\cos\beta_1=\frac{r_1^2+r_2^2-t^2}{2r_1r_2}\leq \cos A_0\Big\}\\
&\times t^{2(w-\frac{N}{2})}(r_1r_2)^{-w}\begin{cases}
(\pi-\beta_1)^{\frac12-\varepsilon},&\text{if}\quad N=3\\
1,&\text{if}\quad N\geq4.
\end{cases}
\end{align*}
Then, there exists a constant $C(\varepsilon)>0$ such that
$$\sup_{r_1>0}\int_0^{+\infty}\big|K_w(r_1,r_2)\big|r_2^{N-1}\;dr_2=
\sup_{r_2>0}\int_0^{+\infty}\big|K_w(r_1,r_2)\big|r_1^{N-1}\;dr_1<C(\varepsilon).$$
\end{lemma}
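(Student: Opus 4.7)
The plan is first to exploit the obvious symmetry $|K_w(r_1,r_2)|=|K_w(r_2,r_1)|$: the indicator $\chi\{-1<\cos\beta_1\le\cos A_0\}$, the prefactor $t^{2(w-N/2)}(r_1r_2)^{-w}$, and the case factor (either $(\pi-\beta_1)^{1/2-\varepsilon}$ or $1$) are all symmetric in $(r_1,r_2)$. This reduces the statement to the single bound
\[
\sup_{r_1>0}\int_0^{\infty}|K_w(r_1,r_2)|\,r_2^{N-1}\,dr_2\le C(\varepsilon),
\]
and I note that $|K_w|$ depends on $w$ only through $\mathrm{Re}\,w=\varepsilon$.

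The crucial reduction is a scaling substitution. Setting $s=r_2/r_1$ and $\tau=t/r_1$, a direct computation yields
\[
\int_0^{\infty}|K_w(r_1,r_2)|\,r_2^{N-1}\,dr_2 \;=\; \tau^{2\varepsilon-N}\int_{R(\tau)} s^{N-1-\varepsilon}\,f(\beta_1(s,\tau))\,ds,
\]
where $\cos\beta_1(s,\tau)=(1+s^2-\tau^2)/(2s)$, with $f=(\pi-\beta_1)^{1/2-\varepsilon}$ for $N=3$ and $f\equiv 1$ for $N\ge 4$; the $r_1$-dependence thus collapses into a one-dimensional quantity in $\tau$ only. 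Solving the constraint $\beta_1\in[A_0,\pi)$, one finds $R(\tau)=\varnothing$ for $\tau\le\sin A_0$, $R(\tau)=[\cos A_0-\sqrt{\tau^2-\sin^2A_0},\,\cos A_0+\sqrt{\tau^2-\sin^2A_0}]$ for $\sin A_0<\tau\le 1$, and $R(\tau)=(\tau-1,\,\cos A_0+\sqrt{\tau^2-\sin^2A_0}]$ for $\tau>1$.

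With this description of $R(\tau)$ in hand I would split according to the size of $\tau$. For $\tau\in(\sin A_0,2]$ every factor is uniformly bounded and $R(\tau)$ has length $\le 2$, so the contribution is $O_{\varepsilon}(1)$. For $\tau>2$, $R(\tau)\subset(\tau-1,\tau+1)$ has length at most $1+\cos A_0$ and $s\sim\tau$ on $R(\tau)$, giving (for $N\ge 4$)
\[
\tau^{2\varepsilon-N}\int_{R(\tau)}s^{N-1-\varepsilon}\,ds\;\lesssim\;\tau^{2\varepsilon-N}\cdot\tau^{N-1-\varepsilon}\;=\;\tau^{\varepsilon-1},
\]
which is bounded since $\varepsilon<1$.

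The main (minor) obstacle is the additional factor $(\pi-\beta_1)^{1/2-\varepsilon}$ in the $N=3$ case, which is singular as $\beta_1\to\pi$, i.e.\ as $s\to(\tau-1)^+$ (which occurs only for $\tau>1$). Using the factorisation $1+\cos\beta_1=(s+1-\tau)(s+1+\tau)/(2s)$ and the change of variable $u=s-(\tau-1)$ gives
\[
1+\cos\beta_1 \;=\; \frac{u\,(2\tau+u)}{2(\tau-1+u)},
\]
whence a direct estimate yields $\pi-\beta_1\gtrsim\sqrt{u}$ uniformly for $\tau$ in any compact subset of $(1,\infty)$, with a further short check handling $\tau\downarrow 1$. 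Therefore $(\pi-\beta_1)^{1/2-\varepsilon}\lesssim u^{1/4-\varepsilon/2}$, and since $\varepsilon<1$ implies $1/4-\varepsilon/2>-1$, this singularity is integrable near $u=0$. Inserting this integrable singularity into the two bounds above leaves them unchanged, completing the proof; the careful verification of the local expansion near $\beta_1=\pi$ is the only delicate point, everything else being routine region analysis.
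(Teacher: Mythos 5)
Your proof is correct and follows essentially the same route as the paper's: both exploit homogeneity to reduce to a one-variable integral in $s=r_2/r_1$ (the paper's substitution $h$), use the constraint $\cos\beta_1\le\cos A_0$ to force $r_1,r_2\lesssim_{A_0}t$, and control the angular factor via $\pi-\beta_1\sim\big(\tfrac{(r_1+r_2)^2-t^2}{r_1r_2}\big)^{1/2}$, which converts the $N=3$ singularity into an integrable power of $s-(\tau-1)$ with exponent $>-1$. The only cosmetic difference is that the paper handles all dimensions at once by bounding the case factor by $(\pi-\beta_1)^{-2\varepsilon}$ (still integrable since $\varepsilon<1$), whereas you treat $N=3$ and $N\ge4$ separately and make the region $R(\tau)$ explicit.
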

\begin{proof}
Before concluding the proof of Lemma \ref{lem:aim}, we first emphasize the conditions  $\frac{1}{2}<\varepsilon<1$ and $0<\beta_1=\arccos\tfrac{r_1^2+r_2^2-t^2}{2r_1r_2}<\pi$ in this article, which will be applied frequently in subsequent proofs.

Our goal is to show that there exists a constant $C(\varepsilon)>0$ such that for all $r_1>0$ and $-1<\cos \beta_1=\frac{r_1^2+r_2^2-t^2}{2r_1r_2}\leq \cos A_0$, we can establish
\begin{align}\label{equ:aimest}
\mathcal{F}\triangleq\int
t^{2(\varepsilon-\frac{N}{2})}(r_1r_2)^{-\varepsilon}(\pi-\beta_1)^{-2\varepsilon}
r_2^{N-1}\;dr_2<C(\varepsilon).
\end{align}
Combining the fact
$$\pi-\beta_1=2\tfrac{\pi-\beta_1}{2}\sim \sin\tfrac{\pi-\beta_1}{2}=2\sqrt{\tfrac{1-\cos(\pi-\beta_1)}2}=2\big(\tfrac{(r_1+r_2)^2-t^2}{4r_1r_2}\big)^\frac12$$
with
$$\tfrac{(r_1+r_2)^2-t^2}{2r_1r_2}\leq \cos A_0,$$
we can acquire
$$\big(1-\tfrac{r_2}{r_1}\big)^2+2\tfrac{r_2}{r_1}(1-\cos A_0)\leq\big(\tfrac{t}{r_1}\big)^2.$$
Therefore, there exists a constant $C(A_0)>0$ such that $r_1\leq C(A_0)t.$
Furthermore, using the same approach, we can deduce that $r_2\leq C(A_0)t$.

As a consequence, by changing variable $h=\tfrac{r_2}{r_1}-\big(\tfrac{t}{r_1}-1\big)$, we can compute
\begin{align*}
  \mathcal{F}\lesssim & C(A_0)\int
t^{2(\varepsilon-\frac{N}{2})}\big[(r_1+r_2)^2-t^2\big]^{-\varepsilon}
r_2^{N-1}\;dr_2\\
\lesssim&C(A_0)\int_{|r_2-r_1|<t<r_1+r_2}\big(\tfrac{r_1+r_2-t}{r_1}\big)^{-\varepsilon}
d\big(\tfrac{r_2}{r_1}\big)\\
=&C(A_0)\int_0^2 h^{-\varepsilon}\;dh\lesssim C(A_0)\frac{1}{1-\varepsilon},
\end{align*}
which implies \eqref{equ:aimest} and finishes the proof of Lemma \ref{lem:aim}.
\end{proof}
\section{The proof of Theorem \ref{Lpestimate}}
In this section, we focus on the proof of Theorem \ref{Lpestimate}. Inspired by the work of \cite{L}, the main idea is primarily based on the asymptotic properties of Bessel functions and results related to spectral multipliers within the context of magnetic fields for higher dimensions. Also, we divide the proof into three steps.

\textbf{Step 1: Spectral multiplier theorem.}
The spectral multiplier theorem serves as the foundational result for demonstrating Theorem \ref{Lpestimate}. Notably, from the Simon's diamagnetic inequality (see \cite{AHS} and \cite[Theorem B.13.2]{Simon}), the heat kernel estimate for magnetic fields can be derived directly, then we can obtain the spectral multiplier theorem below by \cite[Theorem 7.23]{Ouhabaz} and \cite[Theorem 2]{SW}.
\begin{lemma}\label{lem:multiplier}
Let $\LL_{\A}$ be the magnetic Schr\"odinger operator in \eqref{LA}.

$\bullet$ $L^p$-bounds for imaginary powers operator.  For all $y\in\R$, then the imaginary powers $(\LL_{\A})^{iy}$ satisfy the $(1,1)$ weak type estimate
\begin{equation}\label{im-bounds}
\big\|(\LL_{\A})^{iy}\big\|_{L^1(\R^N)\to L^{1,\infty}(\R^N)}\leq C(1+|y|)^{N/2}.
\end{equation}
In particular,  there exists a constant $C>0$ such that
\begin{equation}\label{spectral-LA}
\|(\LL_{\A})^i\|_{L^p(\R^N)\to L^{p}(\R^N)} \leq C, \quad\forall 1<p<+\infty.
\end{equation}

$\bullet$ Mikhlin-H\"ormander multiplier. Assume that $g:[0,+\infty) \rightarrow \mathbb{R}$ satisfies
\begin{equation}
\sum_{j=0}^{[\frac{N}{2}+1]} \sup _{s \geq 0} s^j|g^{(j)}(s)|<+\infty,
\end{equation}
then, for all $1<p<+\infty$ and $t>0$,
\begin{equation}\label{estimate:spectraltLA}
\|g(t \sqrt{\LL_{\A}})\|_{L^p(\R^N)\to L^{p}(\R^N)} \leq C \sum_{j=0}^{[\frac{N}{2}+1]} \sup _{s \geq 0} s^j|g^{(j)}(s)| .
\end{equation}

$\bullet$ Let $\sigma>\frac{N}{2}$ and $g:[0,+\infty) \to \mathbb{R}$ obeys
\begin{equation}\label{estimate-condition}
|g^{(j)}(s)| \leq C (1+s)^{-\sigma}, \quad\forall s \geq 0 \text { and } 0 \leq j \leq [\frac{N}{2}]+1,
\end{equation}
then there exists a constant $C(\sigma)>0$ such that
\begin{equation}\label{ResultIII}
\|g(t \sqrt{\LL_{\A}})\|_{L^p(\R^N)\to L^{p}(\R^N)} \leq C(\sigma), \quad \forall 1<p<+\infty, \ t>0 .
\end{equation}
\end{lemma}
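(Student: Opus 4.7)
The plan is to reduce all three assertions to the general spectral-multiplier theory for non-negative self-adjoint operators whose heat semigroup admits a Gaussian upper bound, and to verify that $\LL_{\A}$ falls into this class through the diamagnetic inequality. Concretely, Simon's diamagnetic inequality (cf.\ \cite{AHS} and \cite[Theorem B.13.2]{Simon}) yields
\begin{equation*}
\bigl|e^{-t\LL_{\A}}f\bigr|(x) \leq e^{t\Delta}|f|(x), \qquad t>0,\; f\in L^2(\R^N),
\end{equation*}
and since the free heat kernel on $\R^N$ is Gaussian, the integral kernel $p_{t,\A}(x,y)$ of $e^{-t\LL_{\A}}$ inherits the pointwise bound $|p_{t,\A}(x,y)|\leq C t^{-N/2} e^{-c|x-y|^2/t}$. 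This places $\LL_{\A}$ inside the hypotheses of \cite[Theorem 7.23]{Ouhabaz} and \cite[Theorem 2]{SW}, which will do the real work.

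For the imaginary-powers estimate I would represent $(\LL_{\A})^{iy}$ as a regularised Mellin transform of the heat semigroup $e^{-t\LL_{\A}}$ and apply the Calder\'on--Zygmund theory of \cite[Theorem 7.23]{Ouhabaz} for singular integrals with operator-valued Gaussian kernels; a careful tracking of the dependence of the H\"ormander constants on $y$ produces the polynomial factor $(1+|y|)^{N/2}$ in the weak $(1,1)$ bound \eqref{im-bounds}. Marcinkiewicz interpolation with the trivial $L^2$-isometry $\|(\LL_{\A})^{iy}\|_{2\to 2}=1$ then yields \eqref{spectral-LA}. For the Mikhlin--H\"ormander bullet I would invoke directly the abstract spectral multiplier theorem of \cite[Theorem 2]{SW}: under the Gaussian heat-kernel bound above, any $g$ whose first $[N/2]+1$ derivatives satisfy the scale-invariant condition of \eqref{estimate:spectraltLA} defines a bounded operator on every $L^p(\R^N)$ for $1<p<+\infty$, with norm controlled linearly by the stated symbol norm. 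The scaling $g\mapsto g(t\cdot)$ preserves both the condition and the resulting operator norm, giving a bound uniform in $t>0$.

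The third bullet follows from the second by a standard dyadic decomposition: choose a Littlewood--Paley partition of unity $1=\sum_{k\in\Z}\beta(2^{-k}s)$ on $(0,\infty)$ with $\beta\in C_c^\infty((1/2,2))$ and write $g=\sum_k g_k$ with $g_k(s)=\beta(2^{-k}s)g(s)$. Under \eqref{estimate-condition}, each $g_k$ obeys the symbol norm of \eqref{estimate:spectraltLA} with constant $\lesssim (1+2^k)^{-\sigma}$; summing the resulting operator norms from the second bullet over $k\in\Z$ converges because $\sigma>N/2$, giving \eqref{ResultIII}. I expect the main obstacle to be the precise growth rate in $|y|$ of the weak $(1,1)$ constant in the first bullet: this requires a genuine bookkeeping of the $y$-dependence inside the Calder\'on--Zygmund argument rather than a black-box application of the cited theorem, whereas the remaining two bullets become almost immediate once the Gaussian bound on $p_{t,\A}$ has been supplied.
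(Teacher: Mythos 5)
Your proposal follows exactly the route the paper takes: the diamagnetic inequality of Avron--Herbst--Simon gives pointwise domination of $e^{-t\LL_{\A}}$ by the free heat semigroup, hence a Gaussian upper bound on the kernel, after which the three bullets are read off from \cite[Theorem 7.23]{Ouhabaz} and \cite[Theorem 2]{SW} (the paper gives no more detail than this one-sentence reduction). Your additional fill-ins --- the Mellin/Calder\'on--Zygmund bookkeeping for the $(1+|y|)^{N/2}$ weak $(1,1)$ constant, interpolation with the $L^2$ isometry, and the dyadic summation deducing the third bullet from the second using $\sigma>N/2$ --- are standard and correct.
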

\textbf{Step 2: Division of the proof interval.} To show Theorem \ref{Lpestimate}, it suffices to verify that for all $\frac{N}{4}>\ell>\frac{N-1}{4}$ and $1< p<+\infty$, there exists a constant $C(p, \ell)>0$ such that for all $f \in L^p\left(\mathbb{R}^N\right)$ and $t>0$
\begin{equation}\label{estimate-aim}
\left\|\left(1+\mathcal{L}_{\mathbf{A}}\right)^{-\ell} e^{i t \sqrt{\mathcal{L}_{\mathbf{A}}}} f\right\|_{L^p(\R^N)} \leq C(p, \ell)(1+t)^{2 \ell}\|f\|_{L^p(\R^N)},
\end{equation}
which corresponds to the region $\Omega_{2}$.
In fact, once we establish that the region $\Omega_{2}$ is concluded, the other two regions can be determined immediately. Next, we will prove how the other two regions can be derived from this part.

When $\ell \geq \frac{N}{4}$, it is evident that the operator $\left(1+\mathcal{L}_{\mathbf{A}}\right)^{-\left(\ell-\frac{2N-1}{8}\right)}$ is bounded in $L^p\left(\mathbb{R}^N\right)$ for $1< p<+\infty$, with the norm that is independent of $p$. Combining with \eqref{estimate-aim} yields Theorem \ref{Lpestimate} for region $\Omega_{3}$.

For $\ell \leq \frac{N-1}{4}$, the point $A$ is trivial. Therefore, we utilize Stein's interpolation theorem (see \cite{stein}) to interpolate between the point $A$ and region $\Omega_{2}$, and then obtain Theorem \ref{Lpestimate} in region $\Omega_{1}$.
\begin{remark}
As a matter of fact, the partition at $l\geq\frac{N}{4}$ is not strictly necessary, and $\frac{N}{4}$ can be substituted with any number larger than $\frac{N-1}{4}$. Accordingly, the boundedness of \eqref{estimate-aim} can also be altered to $(1+t)^{\frac{N-1}{2}+\varepsilon}$ for all $\varepsilon>0$.
\end{remark}

\begin{figure}[htbp]
\begin{center}
\scalebox{0.9}[0.9]{
\begin{tikzpicture}
\draw (0,0) rectangle (6.75,6.75);
\draw[->]  (0,0) -- (0,7.5);
\draw[->]  (0,0) -- (7.5,0);
\draw (6.75,0) node[below] {$\ 1$};
\draw (0,0) node[below, left] {$0$};
\draw (7.5,0) node[above] {$1/p$};
\draw (0,7.5) node[right] {$l$};
\draw (3.35,0) circle (2pt) node[left] {$A$};    
\draw (6.75,1.7)  circle (2pt) node[right] {$B$}; 
\draw (0,1.7)  circle (2pt) node[left] {$C$}; 
\draw (6.75,3.35)  circle (2pt) node[right] {$\!D$}; 
\draw (0,3.35)  circle (2pt) node[left] {$\!E$}; 
\draw[dashed] (3.35,0) -- (0,1.7) ; 
\draw[dashed] (3.35,0) -- (6.75,1.7) ; 
\draw[dashed] (0,1.7) -- (6.75,1.7) ; 
\draw[dashed] (0,3.35) -- (6.75,3.35) ; 
\draw (3.35,0) node[below] {$\frac{1}{2}$};
\filldraw[fill=gray!50][dashed](0,1.7)--(6.75,1.7)--(3.35,0)--cycle; 
\filldraw[fill=blue!50](6.75,3.35)--(6.75,1.7)--(0,1.7)--(0,3.35);
\filldraw[fill=red!50](6.75,3.35)--(6.75,6.75)--(0,6.75)--(0,3.35);
\filldraw[fill=black](3.15,4.5) node[above right] {$\Omega_{3}$};
\filldraw[fill=black](3.15,2.25)  node[above right] {$\Omega_{2}$};
\filldraw[fill=black](3.15,0.7) node[above right] {$\Omega_{1}$};
\end{tikzpicture}
}
\end{center}
\caption{Here $A=(\frac{1}{2},0)$, $B=(1,\frac{N-1}{4})$, $C=(0,\frac{N-1}{4})$, $D=(1,\frac{N}{4})$, $E=(0,\frac{N}{4})$, respectively. The line $AB: l=\frac{N-1}{2}(\frac{1}{p}-\frac{1}{2})$. The line $AC: l=\frac{N-1}{2}(\frac{1}{2}-\frac{1}{p})$.}  \label{fig:1}
\end{figure}
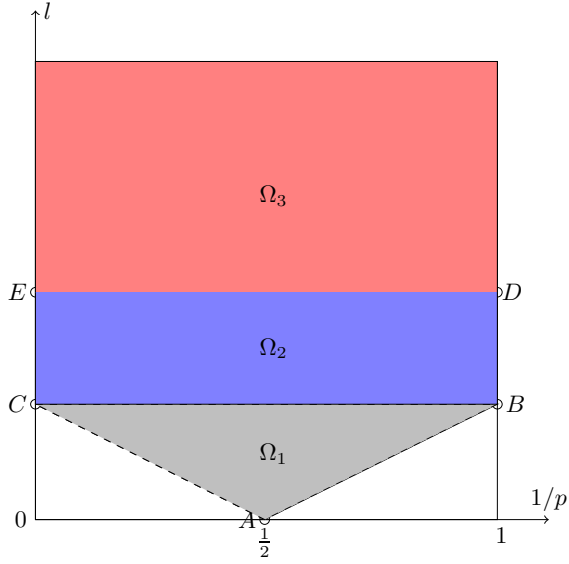

\textbf{Step 3: The proof of \eqref{estimate-aim}.} First, for all $s>0$, we define the following functions
$$
\begin{aligned}
&m(\ell, s)=\left(1+s^2\right)^{-\ell} e^{i s},\\
& m_{\ell}(s)=\psi(s) s^{-2 \ell} e^{i s}, \\
& M_{\ell}(s)=m(\ell, s)-m_{\ell}(s),
\end{aligned}
$$
in which $\psi \in C^{\infty}\left(\mathbb{R}_{+}\right)$ satisfies the following conditions:
\[
\psi(s) =
\begin{cases}
0, & \text{if } s \leq 1, \\
1, & \text{if } s \geq 2.
\end{cases}
\]
Then, from \eqref{estimate:spectraltLA} in Lemma \ref{lem:multiplier}, we attain
$$
\begin{aligned}
\Big\|\left(1+\mathcal{L}_{\mathbf{A}}\right)^{-\ell} e^{i t \sqrt{\mathcal{L}_{\mathbf{A}}}}\Big\|_{p \rightarrow p} & \leq\Big\|\frac{\left(1+t^2 \mathcal{L}_{\mathbf{A}}\right)^{\ell}}{\left(1+\mathcal{L}_{\mathbf{A}}\right)^{\ell}}\Big\|_{p \rightarrow p}\times\Big\|\left(1+t^2 \mathcal{L}_{\mathbf{A}}\right)^{-\ell} e^{i t \sqrt{\mathcal{L}_{\mathbf{A}}}}\Big\|_{p \rightarrow p} \\
& \leq C \sum_{j=0}^2 \sup _{s \geq 0}\Big|s^j \frac{d^j}{d s^j} \frac{\left(1+s^2\right)^{\ell}}{\left(1+s^2/t^2\right)^{\ell}}\Big| \times \Big\|\left(1+t^2 \mathcal{L}_{\mathbf{A}}\right)^{-\ell} e^{i t \sqrt{\mathcal{L}_{\mathbf{A}}}}\Big\|_{p \rightarrow p}\\
&\lesssim (1+t)^{2\ell}\Big\|\left(1+t^2 \mathcal{L}_{\mathbf{A}}\right)^{-\ell} e^{i t \sqrt{\mathcal{L}_{\mathbf{A}}}}\Big\|_{p \rightarrow p}.
\end{aligned}
$$
Therefore, the proof of \eqref{estimate-aim} can be reduced to showing that for all $\frac{N}{4}>\ell>\frac{N-1}{4}$ and $1<p<+\infty$, there exists a constant $C(\ell, p)>0$ such that
\begin{equation}\label{estimate-mlt}
\|m(\ell, t \sqrt{\mathcal{L}_{\mathbf{A}}})\|_{p \rightarrow p} \leq C(\ell, p), \quad \forall t>0 .
\end{equation}

Consequently, combining with the definition of $m(\ell, t \sqrt{\mathcal{L}_{\mathbf{A}}})$, it remains to achieve the following proposition. The proof of Proposition \ref{estimate:ml} follows the general outline in \cite[Theorem 1.2]{L} and we omits the detailed proof.
\begin{proposition}\label{estimate:ml}
Let $\frac{N}{4}>\ell>\frac{N-1}{4}$ and $1<p<+\infty$. Then, there exists a constant $C(p, \ell)>0$ such that
\begin{equation}\label{estimate-ml}
\|m_{\ell}(t \sqrt{\mathcal{L}_{\mathbf{A}}}) f\|_{L^p(\R^N)} \leq C(p, \ell)\|f\|_{L^p(\R^N)}, \quad \forall f \in L^p\left(\mathbb{R}^N\right), t>0,
\end{equation}
and
\begin{equation}\label{estimate:Mls1}
\|M_{\ell}(t \sqrt{\mathcal{L}_{\mathbf{A}}}) f\|_{L^p(\R^N)} \leq C(\ell)\|f\|_{L^p(\R^N)}, \quad \forall f \in L^p\left(\mathbb{R}^N\right), t>0.
\end{equation}
\end{proposition}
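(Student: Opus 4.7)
The plan is to handle \eqref{estimate:Mls1} and \eqref{estimate-ml} separately, with the former being a direct application of the spectral multiplier theorem in Lemma \ref{lem:multiplier} and the latter being the main estimate, which will use the analytic family bound of Theorem \ref{operator-family} together with the large-argument asymptotic of Bessel functions.

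For \eqref{estimate:Mls1}, I would verify directly that $M_{\ell}$ satisfies the hypothesis \eqref{estimate-condition} of Lemma \ref{lem:multiplier} with $\sigma=2\ell+2$. The function $M_\ell$ is $C^\infty$ on $[0,\infty)$: on $[0,1]$ it equals $(1+s^2)^{-\ell}e^{is}$ with all derivatives bounded, and on $[2,\infty)$ it equals $s^{-2\ell}\bigl[(1+s^{-2})^{-\ell}-1\bigr]e^{is}$. Expanding $(1+s^{-2})^{-\ell}-1=-\ell s^{-2}+O(s^{-4})$ and noting that the factor $e^{is}$ contributes only unit-modulus oscillations under differentiation, Leibniz gives $|M_\ell^{(j)}(s)|\lesssim (1+s)^{-2\ell-2}$ for $0\leq j\leq[N/2]+1$. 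Since $\ell>\tfrac{N-1}{4}$ forces $\sigma=2\ell+2>\tfrac{N}{2}$, the bound \eqref{ResultIII} applies and yields \eqref{estimate:Mls1} uniformly in $t>0$.

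For \eqref{estimate-ml}, I would set $\varepsilon:=\tfrac{N+1}{2}-2\ell$; the hypothesis $\tfrac{N-1}{4}<\ell<\tfrac{N}{4}$ places $\varepsilon\in(\tfrac12,1)$, so Theorem \ref{operator-family} applies for all $w=\varepsilon+iy\bigl(\tfrac{N+1}{2}-\varepsilon\bigr)$, $y\in\R$. Applying the classical expansion $J_\nu(s)=\sqrt{2/(\pi s)}\cos(s-\tfrac{\nu\pi}{2}-\tfrac{\pi}{4})+O(s^{-3/2})$ to $\sqrt{\pi/2}\,(ts)^{w-N/2}J_{N/2-w}(ts)$ and using $\mathrm{Re}(w-N/2-\tfrac12)=-2\ell$, one obtains, after inserting the cutoff $\psi$, the symbolic decomposition
$$\psi(t\sqrt{\LL_\A})\,f_{w,t}(\LL_\A)=A_w\,m_\ell(t\sqrt{\LL_\A})+B_w\,\overline{m_\ell}(t\sqrt{\LL_\A})+(\psi r_w)(t\sqrt{\LL_\A}),$$
where $A_w,B_w$ are explicit coefficients carrying exponential factors $e^{\mp\pi y(\frac{N+1}{2}-\varepsilon)/2}$ coming from $\mathrm{Im}(w)$, and $r_w$ is the Bessel asymptotic remainder, satisfying $|r_w^{(j)}(s)|\lesssim (1+s)^{-2\ell-1}$ for all $j$. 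Since $\ell>\tfrac{N-1}{4}$ gives $2\ell+1>\tfrac{N}{2}$, the operator $(\psi r_w)(t\sqrt{\LL_\A})$ is bounded on $L^p$ uniformly in $t$ by \eqref{ResultIII}; similarly the low-frequency complement $(1-\psi)(t\sqrt{\LL_\A})f_{w,t}(\LL_\A)$, whose symbol is smooth and compactly supported, is a uniformly bounded multiplier absorbed into the remainder term $\mathcal{R}_{w,t}$.

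The main technical obstacle is isolating $m_\ell$ from its companion $\overline{m_\ell}$, since a real-indexed Bessel expansion produces only their symmetric combination. To overcome this, I would apply the identity at two conjugate parameter values $w_\pm=\varepsilon\pm iy_0\bigl(\tfrac{N+1}{2}-\varepsilon\bigr)$ with a fixed $y_0>0$. The antisymmetric dependence of $A_w,B_w$ on $\mathrm{Im}(w)$ makes the coefficient matrix have determinant proportional to $\sinh\bigl(\pi y_0(\tfrac{N+1}{2}-\varepsilon)\bigr)\neq 0$, so inverting the resulting $2\times 2$ system expresses $m_\ell(t\sqrt{\LL_\A})$ as a bounded linear combination of $f_{w_+,t}(\LL_\A),f_{w_-,t}(\LL_\A)$ and the remainders $\mathcal{R}_{w_\pm,t}$. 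A Stirling estimate for $|\Gamma(\tfrac{N}{2}-w_\pm)|$ confirms that the prefactor $e^{-2N\pi|y_0|}/|\Gamma(\tfrac{N}{2}-w_\pm)|$ appearing in \eqref{est:fw} is $O(1)$ for fixed $y_0$, so the norm bound is uniform in $t>0$, which yields \eqref{estimate-ml}.
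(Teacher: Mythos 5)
The paper itself does not prove Proposition \ref{estimate:ml}; it only points to the outline of \cite[Theorem 1.2]{L}, so your attempt can only be judged on its own merits. Your treatment of \eqref{estimate:Mls1} is correct: $M_\ell$ is smooth, on $s\geq 2$ it equals $s^{-2\ell}\bigl[(1+s^{-2})^{-\ell}-1\bigr]e^{is}=O(s^{-2\ell-2})$ together with all derivatives up to order $[N/2]+1$, and $\ell>\tfrac{N-1}{4}$ gives $2\ell+2>\tfrac{N}{2}$, so \eqref{ResultIII} applies. Your strategy for \eqref{estimate-ml} -- Bessel asymptotics applied to $f_{w,t}$, a remainder controlled by \eqref{ResultIII} since $2\ell+1>\tfrac N2$, and the recognition that the real obstacle is separating $m_\ell$ from $\overline{m_\ell}$ -- is also the right general outline and consistent with \cite{L}.

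The genuine gap is in your central displayed identity. The principal symbol of $f_{w,t}$ at $s=t\lambda\gg1$ is $s^{w-\frac{N+1}{2}}$ times oscillations; with $w_\pm=\varepsilon\pm i\tau_0$, $\tau_0=y_0(\tfrac{N+1}{2}-\varepsilon)$, this is $s^{-2\ell}\,s^{\pm i\tau_0}$, and the purely imaginary power $s^{\pm i\tau_0}$ is a genuinely non-constant multiplier: only the factors $e^{\mp\pi\tau_0/2}$ coming from $e^{\pm i\nu\pi/2}$ are scalars. As written, $\psi f_{w,t}=A_w m_\ell+B_w\overline{m_\ell}+\psi r_w$ with scalar $A_w,B_w$ is false for $y_0\neq0$, so your two equations compare quantities carrying the extra factors $(t\sqrt{\LL_\A})^{+i\tau_0}$ and $(t\sqrt{\LL_\A})^{-i\tau_0}$ respectively and cannot be inverted as a numerical $2\times2$ system. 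The repair is available inside the paper and is presumably why Lemma \ref{lem:multiplier} records the imaginary-powers bound \eqref{spectral-LA}, which you never invoke: write $\psi(t\sqrt{\LL_\A})f_{w_\pm,t}(\LL_\A)=(t\sqrt{\LL_\A})^{\pm i\tau_0}\bigl[A_\pm m_\ell(t\sqrt{\LL_\A})+B_\pm\overline{m_\ell}(t\sqrt{\LL_\A})\bigr]+\mathcal{R}_{\pm}$, compose with $(t\sqrt{\LL_\A})^{\mp i\tau_0}=t^{\mp i\tau_0}\LL_\A^{\mp i\tau_0/2}$, which is $L^p$-bounded uniformly in $t$ by \eqref{spectral-LA}, and only then solve the resulting scalar system. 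Your determinant computation ($\det\propto\sinh(\pi\tau_0)\neq0$) is correct, and with this correction the remaining steps (low-frequency piece, remainder, Stirling for $|\Gamma(\tfrac N2-w_\pm)|$) go through.
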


\section{The proof of Theorem \ref{sinLA}}
In this section, we aim to show Theorem \ref{sinLA} by utilizing the $p\rightarrow p$ boundedness of the analytic operator family $f_{w,t}(\LL_{\A})$ and Stein's interpolation theorem.

First, based on the result from \cite[p.463]{Stri}, we observe that
$$\big|f_{\frac{N+1}{2}+i(\frac{N+1}{2}-\varepsilon)s,t}(h)\big|
\leq C_1e^{C_2(\frac{N+1}{2}-\varepsilon)|s|},\quad \forall\;t,h>0,$$
where we define $w=\varepsilon+(1+is)(\frac{N+1}{2}-\varepsilon)$ with $\tfrac12<\varepsilon<1$.
Thus, we can obtain
\begin{equation}\label{equ:2to2}
\big\|f_{\frac{N+1}{2}+i(\frac{N+1}{2}-\varepsilon)s,t}(\LL_{\A})\big\|_{L^2\to L^2}
\leq C_1e^{C_2(\frac{N+1}{2}-\varepsilon)|s|},
\end{equation}
in which $C_1$ and $C_2$ are two constants that are independent of the other variables. Moreover, with Theorem \ref{operator-family} in hand, we know
\begin{equation}\label{est:fw}
\left\|f_{w,t}(\mathcal{L}_{\mathbf{A}})g\right\|_{L^p(\R^N)}\leq C(\varepsilon,p)\frac{e^{-2N\pi|y|}}{|\Gamma(\frac{N}{2}-w)|}\left\| g\right\|_{L^p(\R^N)}, \quad\forall\; g\in L^{p}(\mathbb{R}^{N})
\end{equation}
for $w=\varepsilon+iy(\frac{N+1}{2}-\epsilon)$ with $y\in \mathbb{R}$ and $1<p<\infty$.\\
Note that
$$f_{\frac{N-1}{2},t}(h)=\frac{\sin(t\sqrt{h})}{t\sqrt{h}}.$$
Applying the Stein's interpolation theorem to \eqref{equ:2to2} and \eqref{est:fw} with $p=1$, there exists a constant $C(p(\varepsilon))>0$ such that
$$\Big\|\frac{\sin(t\sqrt{\LL_{\A}})}{t\sqrt{\LL_{\A}}}\Big\|_{p(\varepsilon)
\to p(\varepsilon)}\leq C(p(\varepsilon)).$$
in which $p(\varepsilon)$ will be chosen later.
Indeed, we define $\theta(\varepsilon)\in(0,1)$ by
$$\frac{N-1}{2}=\frac{N+1}{2}\theta(\varepsilon)+\varepsilon(1-\theta(\varepsilon)),$$
and
$$\frac{1}{p(\varepsilon)}=\big(1-\theta(\varepsilon)\big)+\frac{\theta(\varepsilon)}{2}
=1-\frac{\theta(\varepsilon)}{2}\quad\text{or}\quad
\frac{1}{p(\varepsilon)}=\frac{\theta(\varepsilon)}{2}.$$
Then, we can acquire
$$\tfrac1{p(\varepsilon)}\in\big(\tfrac12\tfrac{N-3}{N-1},\tfrac12\tfrac{N-2}N\big)\cup\big(\tfrac12+\tfrac1N,
\tfrac12+\tfrac1{N-1}\big).$$

Finally, employing Riesz's convexity theorem, we can achieve Theorem \ref{sinLA}.

\begin{center}

\end{center}
 \end{document}